\newtheorem{conjecture}{Conjecture}
\newcommand{\BW}{\mathscr{B}}
\begin{document}

\title[Stably recoverable information for the ISP]{Explicit tight bounds on the\\stably recoverable information for the\\inverse source problem}
\author{M Karamehmedovi\'{c}}
\address{Department of Applied Mathematics and Computer Science, Technical University of Denmark, Matematiktorvet build. 303 B, DK-2800 Kgs. Lyngby, Denmark}
\ead{mika@dtu.dk}

\begin{abstract}
For the inverse source problem with the two-dimensional Helmholtz equation, the singular values of the 'source-to-near field' forward operator reveal a sharp frequency cut-off in the stably recoverable information on the source. We prove and numerically validate an explicit, tight lower bound for the spectral location of this cut-off. We also conjecture and support numerically a tight upper bound for the cut-off. The bounds are expressed in terms of zeros of Bessel functions of the first and second kind.
\end{abstract}

\pacs{02.30.Zz Inverse problems, 02.60.Lj Ordinary and partial differential equations; boundary value problems}
\ams{35J05 Laplacian operator, reduced wave equation (Helmholtz equation), Poisson equation [See also 31Axx, 31Bxx], 65J22 Inverse problems}
\submitto{\IP}

\maketitle

\makeatletter

\newtheorem{lemma}{Lemma}
\newtheorem{theorem}{Theorem}
\newtheorem{remark}{Remark}
\newtheorem{corollary}{Corollary}

\makeatother

\section{Introduction}\label{section:introduction}

We treat the single-frequency inverse source problem for the Helmholtz equation in the plane, illustrated in ~\fref{figure:Fig1}.
\begin{figure}
\centering
\scalebox{0.8}{\input{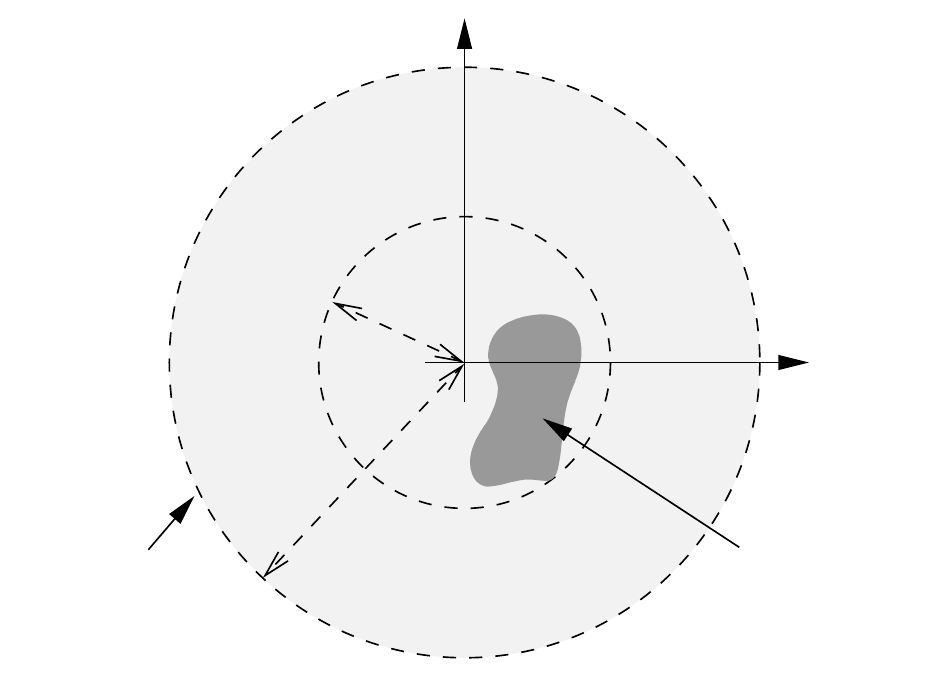_t}}
\caption{Problem geometry and coordinate systems.}\label{figure:Fig1}
\end{figure}
Fix a positive constant wavenumber $k=2\pi/\lambda$, where $\lambda$ is the operating frequency, and let $D_0$ and $D$ be open disks in $\mathbb{R}^2$ centered at the origin and with radii $R_0$ and $R\ge R_0$, respectively. Write $\Delta=\partial_{x_1}^2+\partial_{x_2}^2$ for the Laplacian, and consider the Helmholtz problem
\begin{equation}\label{equation:IHP}
\left\{\begin{array}{rcl}
(\Delta+k^2)u&=&s\quad{\rm in}\,\,\,\mathbb{R}^2,\\
\lim_{|x|\rightarrow\infty}\sqrt{|x|}(\partial_{|x|}-\rmi k)u(x)&=&0,\quad{\rm uniformly}\,\,{\rm for}\,x/|x|\in S^1,
\end{array}
\right.
\end{equation}
for some source $s\in L^2(D_0)$ extended by zero to the whole plane. The second condition in~\eref{equation:IHP} is the outgoing Sommerfeld radiation condition in the plane. The inverse source problem, ISP, is now
\begin{center}
\textit{given a single measurement $U\in L^2(\partial D)$, find a source $s\in L^2(D_0)$ such that\\ there is a function ('radiated field') $u$ satisfying $u|_{\partial D}=U$ and satisfying the system~\eref{equation:IHP}}.
\end{center}
The ISP arises naturally in inverse acoustic and electromagnetic scattering, and has been devoted a substantial body of literature. The ISP is treated, e.g., in the multi-frequency regime by~\citet{Bao-2010}, and with far-field measurement data by~\cite{Griesmaier-2012}; see also~\cite{ElBadia-2011}. It occurs in antenna synthesis and diagnostics \citep{Persson-2005,Jorgensen-2010}, the analytic continuation of solutions of exterior scattering problems~\citep{SS,Zaridze-1998,Bliznyuk-2005,oac}, and in linearized inverse obstacle scattering problems.

In terms of the forward operator $F:s\mapsto U$, described in detail in~\sref{section:spectralanalysis}, solving the ISP amounts to solving
\begin{equation}\label{equation:Fs=U}
Fs=U\quad\text{for}\,\,s\in L^2(D_0).
\end{equation}
This problem is ill-posed, since $\ker F=(\Delta+k^2)H^2(D_0)$, where $H^2(D_0)$ is the Sobolev space $\{\partial^{\alpha}w\in L^2(D_0)\,\,\,{\rm for}\,\,\,\alpha\in\mathbb{N}^2_0\,\,\,{\rm with}\,\,\,|\alpha|\le2\}$. Also, measurements are typically noisy and sampled over a finite set of points. A common regularizing measure is to look for the minimum-$L^2$-norm, or minimum-energy, solution of~\eref{equation:Fs=U}, which is given by $s^{\dagger}=F^{\dagger}U$; here, $F^{\dagger}=(F^{\ast}F)^{-1}F^{\ast}=F^{\ast}(FF^{\ast})^{-1}$ is the Moore-Penrose pseudoinverse of $F$. Another regularization scheme uses a truncated singular value decomposition (TSVD) of the forward operator $F$. Here, $s$ is approximated by a finite sum of the form $\sum_1^N\sigma_m^{-1}(U,\phi)_m\psi_m$, with $(\sigma_m,\psi_m,\phi_m)$ a singular system of $F$. Our aim is to estimate the maximal amount of information about~\emph{any} source $s\in L^2(D_0)$ that can be stably recovered~\emph{in principle}, that is,~\textit{regardless} of the sampling frequency in the measurement and of the choice of the regularisation scheme. By 'stably recoverable information' we mean 'information recoverable robustly to noise,' and we refer to~\fref{figure:Fig2} for a more precise definition. A non-asymptotic analysis of the singular values $\sigma_m$ of the forward operator $F$, performed in~\sref{section:spectralanalysis}, reveals a low-pass filter behavior with well-defined passband and stopband. This turns out to be true also when the singular values are ordered according to increasing angular frequency of the right singular vectors of $F$, that is, of the singular vectors defined at the measurement boundary $\partial D$. In this case, the singular values within the passband generally do not increase or decrease monotonically, and the singular values in the stopband, still ordered according to angular frequency $m$, are monotonic functions of $m$.
\begin{figure}
\begin{center}
\scalebox{0.8}{\input{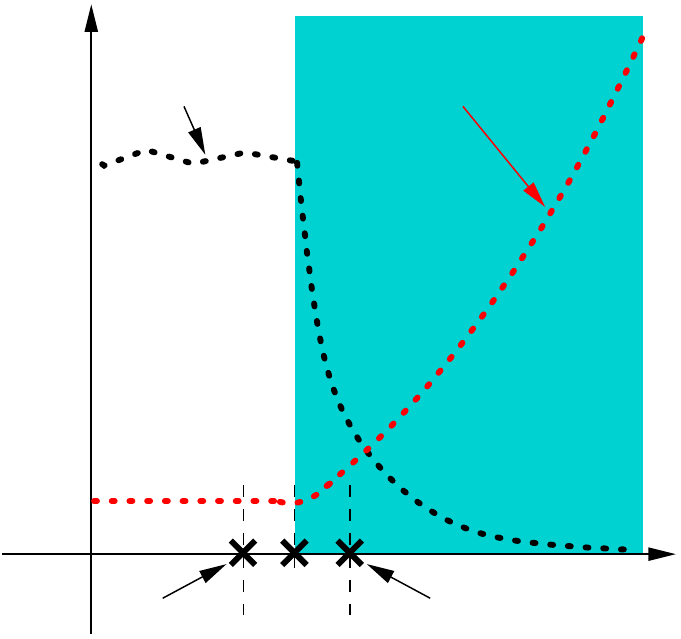_t}}
\caption{A schematic of the singular value spectrum $\sigma_m$ and $\sigma_m^{-1}$ of the forward operator $F$ and its pseudoinverse $F^{\dagger}$, respectively, as function of angular frequency $m$ of right singular vectors of $F$. The lower bandwidth bound $\BW_-$ is given in Theorem 1. The upper bandwidth bound $\BW_+$ is predicted in Conjecture~\ref{conjecture:y}. Both $\BW_-$ and $\BW_+$ are validated numerically in section 3.}\label{figure:Fig2}
\end{center}
\end{figure}
We call the \emph{bandwidth} $\mathscr{B}$ of the forward operator $F$ the singular value index (angular frequency $m$ of a right singular vector of $F$) at which the singular value spectrum of $F$ becomes strictly decreasing as function of nonnegative $m$:
\begin{equation*}
\BW={\rm argmin}_{m\in\mathbb{N}_0}\{\sigma_{m+n}>\sigma_{m+n+1}\,\,{\rm for}\,\,{\rm all}\,\,n\in\mathbb{N}_0\}.
\end{equation*}
With this in mind, we define the stably recoverable information on a source $s$ to be the projection of $s$ onto the singular subspace of $F$ defined by $|m|\le\BW$. Then, finding the maximal amount of stably recoverable information about any source $s$, regardless of measurement sampling quality and of regularization scheme, amounts to estimating the bandwidth $\BW$ of the forward operator $F$.

To simplify the notation, write $\kappa_0=kR_0$ and $\kappa=kR$ for the size parameters of the source support and of the measurement boundary, respectively. Also, for integer $m$, write $j_{m,1}$ and $y_{m,1}$ for the first positive zero of the Bessel function $J_m$ of the first kind, respectively Bessel function $Y_m$ of the second kind, and order $m$. It is well-known~\citep[p. 146]{Magnus-1966} that $j_{m,1}>0$ for all $m\in\mathbb{N}_0$.
Our main result, proved in~\Sref{subsection:proofof}, is
\begin{theorem}\label{theorem:MAIN}
The bandwidth $\BW$ of the forward operator $F:s\mapsto U$ associated with the Helmholtz problem~\eref{equation:IHP} and measurement at $\partial D$ is bounded from below by
\begin{equation*}
\BW_-={\rm argmin}_{m\in\mathbb{N}_0}\{j_{m,1}\ge\kappa_0\}.
\end{equation*}
\end{theorem}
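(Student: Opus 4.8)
The plan is to reduce the statement to an elementary comparison of two consecutive singular values. Expanding the free-space Green function by Graf's addition theorem and separating variables diagonalises $F$ into rank-one blocks indexed by the angular frequency $m$, so that, up to a positive constant $c$ independent of $m$ (absorbing $k$, $R$ and the $L^2$ normalisations on $D_0$ and $\partial D$), the singular values are
\[
\sigma_m = c\,\bigl|H^{(1)}_m(\kappa)\bigr|\,\sqrt{I_m},\qquad I_m=\int_0^{\kappa_0}J_m(t)^2\,t\,dt,
\]
with $\sigma_{-m}=\sigma_m$, so the indexing by nonnegative $m$ in the definition of $\BW$ is unambiguous. I would then observe that it suffices to prove the single inequality $\sigma_{\BW_--1}\le\sigma_{\BW_-}$: if $\sigma_{m_0}\le\sigma_{m_0+1}$ for some index $m_0$, then a strictly decreasing tail cannot start at any index $m'\le m_0$, since with $n=m_0-m'\ge0$ it would force $\sigma_{m_0}>\sigma_{m_0+1}$; hence $\BW\ge m_0+1$, and $m_0=\BW_--1$ gives the claim. (The case $\BW_-=0$, i.e.\ $j_{0,1}\ge\kappa_0$, is vacuous because $\BW\ge0$ always.)

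The argument then rests on two monotonicity facts across the single step $m_0\to m_0+1$ with $m_0=\BW_--1$. First, the measurement factor $|H^{(1)}_m(\kappa)|^2=J_m(\kappa)^2+Y_m(\kappa)^2$ is strictly increasing in $m\ge0$; I would obtain this from Nicholson's integral $J_m(\kappa)^2+Y_m(\kappa)^2=\frac{8}{\pi^2}\int_0^\infty K_0(2\kappa\sinh\tau)\cosh(2m\tau)\,d\tau$, in which $K_0>0$ and $\cosh(2m\tau)$ increases with $m$ for each $\tau>0$. Second, for the source factor I would start from Lommel's integral $I_m=\frac{\kappa_0^2}{2}\bigl[J_m(\kappa_0)^2-J_{m-1}(\kappa_0)J_{m+1}(\kappa_0)\bigr]$ and use the recurrence $J_{m+2}=\frac{2(m+1)}{\kappa_0}J_{m+1}-J_m$ to telescope the difference into the closed form
\[
I_{m+1}-I_m=-\kappa_0\,J_m(\kappa_0)\,J_{m+1}(\kappa_0).
\]

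It then remains to fix the signs of $J_{m_0}(\kappa_0)$ and $J_{m_0+1}(\kappa_0)$. By definition of $\BW_-$ as the first index with $j_{m,1}\ge\kappa_0$, its predecessor $m_0=\BW_--1$ satisfies $j_{m_0,1}<\kappa_0\le j_{m_0+1,1}$. The classical interlacing of Bessel zeros $j_{m_0,1}<j_{m_0+1,1}<j_{m_0,2}$ then traps $\kappa_0$ in the window $(j_{m_0,1},j_{m_0,2})$, on which $J_{m_0}$ is negative, so $J_{m_0}(\kappa_0)<0$; meanwhile $\kappa_0\le j_{m_0+1,1}$ forces $J_{m_0+1}(\kappa_0)\ge0$. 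Hence $J_{m_0}(\kappa_0)J_{m_0+1}(\kappa_0)\le0$, so $I_{\BW_-}\ge I_{\BW_--1}$, and combining with the increase of the Hankel factor yields $\sigma_{\BW_-}\ge\sigma_{\BW_--1}$, which proves the theorem.

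The delicate point, and the reason the threshold is precisely $j_{m,1}\ge\kappa_0$, is that $\sigma_m$ need \emph{not} increase throughout $0\le m\le\BW_-$ (this is the generically non-monotone passband), so no global-monotonicity argument is available; one must localise to the single step at the cutoff, where the interlacing inequality $j_{m_0+1,1}<j_{m_0,2}$ confines $\kappa_0$ to an interval on which $J_{m_0}$ has a definite sign. Establishing this sign-confinement, together with the clean telescoping of $I_{m+1}-I_m$, are the two steps I expect to demand the most care.
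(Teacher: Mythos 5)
Your proof is correct. It shares the paper's skeleton --- the same singular-value formula $\sigma_m\propto|H_m^{(1)}(\kappa)|\sqrt{I_m}$ from Graf's theorem and the Lommel integral, the same difference identity (your $I_{m+1}-I_m=-\kappa_0 J_m(\kappa_0)J_{m+1}(\kappa_0)$ is the paper's $A_m(\kappa_0)^2-A_{m+1}(\kappa_0)^2=(2/\kappa_0)J_m(\kappa_0)J_{m+1}(\kappa_0)$ rescaled by $-\kappa_0^2/2$), and Nicholson's integral for the strict growth of the Hankel factor --- but it handles the one delicate step, fixing the sign of $J_m(\kappa_0)J_{m+1}(\kappa_0)$, by a genuinely different and leaner mechanism. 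The paper argues at every $m$ with $j_{m,1}<\kappa_0$: it locates a zero $\xi$ of the order-map $\mu\mapsto J_\mu(\kappa_0)$ in some interval $[m+n,m+n+1]$ and then needs Lemma~\ref{lemma:only_one_zero} (consecutive zeros of that map are more than $1$ apart, proved via Watson's formula for $dj_{\mu,n}/d\mu$ and a monotonicity computation) to ensure exactly one sign change in that interval. You localise once, at the pair $(\BW_--1,\BW_-)$, and obtain the signs from classical interlacing in the \emph{argument} variable: $j_{m_0,1}<\kappa_0\le j_{m_0+1,1}<j_{m_0,2}$ traps $\kappa_0$ strictly between the first two zeros of $J_{m_0}$, so $J_{m_0}(\kappa_0)<0$ while $J_{m_0+1}(\kappa_0)\ge0$. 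This dispenses entirely with the order-derivative machinery (the paper's most technical lemma) and also sidesteps the subtlety, present in the proof of Lemma~\ref{lemma:A_variation}, of arguing that a zero of the order-map is a genuine sign change rather than a touch point; your signs follow from the elementary simple-zero structure of $x\mapsto J_{m_0}(x)$. Your reduction to the single comparison $\sigma_{\BW_--1}\le\sigma_{\BW_-}$ is valid under the paper's argmin definition of $\BW$, and you correctly dispose of the vacuous case $\BW_-=0$. What the paper's longer route yields in exchange is the slightly stronger intermediate fact that a non-decrease $\sigma_{m+n}\le\sigma_{m+n+1}$ occurs somewhere after \emph{every} index below the cutoff, not only at the cutoff itself; for the theorem as stated, your single-step argument suffices.
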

For convenience, in~\Sref{subsection:proofof} we also show that the bandwidth bound of Theorem~\ref{theorem:MAIN} can be expressed explicitly in the source size parameter $\kappa_0$:
\begin{corollary}\label{corollary:ceil}
For sufficiently large $\kappa_0$, we have
\[
\hspace{-25mm}\BW_-\approx\widetilde{\mathscr{B}}_-\hspace{-1mm}=\hspace{-1mm}\left\lceil\hspace{-1mm}\left(\frac{1}{6}\left(108\kappa_0+12\sqrt{12a_-^3+81\kappa_0^2}\right)^{1/3}\hspace{-1mm}-\frac{2a_-}{\left(108\kappa_0+12\sqrt{12a_-^3+81\kappa_0^2}\right)^{1/3}}\right)^3\,\right\rceil
\]
with $a_-=1.855757$.
\end{corollary}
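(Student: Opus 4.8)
The plan is to invert the defining relation for $\BW_-$ by replacing the Bessel zero with its large-order asymptotics and then solving the resulting equation in closed form. The starting point is the classical uniform (Olver) asymptotics for the first positive zero of $J_m$ in the regime of large order,
\[
j_{m,1}=m+a_-\,m^{1/3}+O(m^{-1/3}),\qquad m\to\infty,
\]
where $a_-=2^{-1/3}|\alpha_1|=1.855757\ldots$ is fixed by $\alpha_1\approx-2.33811$, the first (negative) zero of the Airy function. Discarding the $O(m^{-1/3})$ remainder is precisely what the symbol $\approx$ in the corollary records, and it is the origin of the hypothesis that $\kappa_0$ be sufficiently large.

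By Theorem~\ref{theorem:MAIN}, $\BW_-$ is the least $m\in\mathbb{N}_0$ for which $j_{m,1}\ge\kappa_0$. Since $m\mapsto m+a_-m^{1/3}$ is strictly increasing, the threshold is governed by the continuous equation $m+a_-m^{1/3}=\kappa_0$, and the sought index is the ceiling of its solution. The substitution $t=m^{1/3}$ converts this into the depressed cubic
\[
t^3+a_-\,t-\kappa_0=0 .
\]
Its discriminant $-4a_-^3-27\kappa_0^2$ is negative, so there is a single real root; it is positive because the left-hand side equals $-\kappa_0<0$ at $t=0$ and tends to $+\infty$. I would then apply Cardano's formula and write the root in the compact one-radical form $t=u-a_-/(3u)$ with $u=\bigl(\kappa_0/2+\sqrt{\kappa_0^2/4+a_-^3/27}\bigr)^{1/3}$.

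The remaining step is a routine rationalisation. Clearing the denominators $27$ and $4$ inside the square root and absorbing the factor $6^3=216$ recasts $u$ as $\tfrac16\bigl(108\kappa_0+12\sqrt{12a_-^3+81\kappa_0^2}\bigr)^{1/3}$; correspondingly $a_-/(3u)$ becomes $2a_-/\bigl(108\kappa_0+12\sqrt{12a_-^3+81\kappa_0^2}\bigr)^{1/3}$. Cubing $t=u-a_-/(3u)$ and taking the ceiling reproduces exactly the expression $\widetilde{\mathscr{B}}_-$ in the statement.

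The only genuine obstacle is the bookkeeping of the approximation rather than the algebra. Because $\BW_-$ is an integer produced by a ceiling, one must verify that the neglected $O(m^{-1/3})$ tail of the Bessel asymptotic is, for large $\kappa_0$, too small to push the value $t^3$ across an integer boundary; this is what both pins down the qualifier \emph{for sufficiently large $\kappa_0$} and explains why the identity is asserted only approximately. By contrast, selecting the correct real branch and performing the explicit inversion of the cubic are entirely mechanical.
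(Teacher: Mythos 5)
Your proposal is correct and follows essentially the same route as the paper: invoke Watson's asymptotic $j_{m,1}=m+a_-m^{1/3}+O(m^{-1/3})$, substitute $t=m^{1/3}$ to obtain the depressed cubic $t^3+a_-t-\kappa_0=0$, solve it by Cardano's formula, and take the ceiling of $t^3$. Your additional remarks (discriminant sign, positivity of the real root, and the caveat that the neglected $O(m^{-1/3})$ term must not push $t^3$ across an integer) merely flesh out what the paper compresses into ``readily found'' and ``for sufficiently large $\kappa_0$.''
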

Finally, the general form of the result in Theorem~\ref{theorem:MAIN}, as well as extensive numerical experimentation, lead us to conjecture a tight upper bound on the bandwitdth $\mathscr{B}$:
\begin{conjecture}\label{conjecture:y}
\begin{equation*}
\BW_+={\rm argmin}_{m\in\mathbb{N}_0}\{y_{m,1}\ge\kappa_0\}.
\end{equation*}
\end{conjecture}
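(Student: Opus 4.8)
The plan is to prove the conjectured bound by showing that the singular values are \emph{strictly} decreasing at and beyond every angular frequency $\BW_+$, that is, $\sigma_{m+1}<\sigma_m$ whenever $y_{m,1}\ge\kappa_0$. I start from the separated form of the forward operator established in \sref{section:spectralanalysis}: expanding the outgoing Green's function by Graf's addition theorem diagonalises $F$ in the angular frequency $m$, each block being of rank one, so that up to a constant independent of $m$
\[
\sigma_m=\left|H_m^{(1)}(\kappa)\right|\left(\int_0^{R_0}J_m(k\rho)^2\,\rho\,\rmd\rho\right)^{1/2}=\left|H_m^{(1)}(\kappa)\right|S_m .
\]
The \emph{propagation factor} $|H_m^{(1)}(\kappa)|=\sqrt{J_m(\kappa)^2+Y_m(\kappa)^2}$ is the Bessel modulus at the measurement radius, and $S_m$ is the \emph{source factor}; Lommel's integral together with the recurrence relations puts the latter in the closed form $S_m^2=\frac{R_0^2}{2}\bigl[(1-m^2/\kappa_0^2)J_m(\kappa_0)^2+J_m'(\kappa_0)^2\bigr]$, which isolates the source turning order $m=\kappa_0$.

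The key preliminary is that each factor is free of oscillation away from its turning order. By Nicholson's integral the modulus $|H_m^{(1)}(\kappa)|$ is strictly increasing in $m$. For the source factor I pass to the uniform (Olver) Airy expansion near $m=\kappa_0$, which turns the closed form into $S_m^2\sim c\,\kappa_0^{-4/3}g(w)$ with $g(w)=w\,\mathrm{Ai}(-w)^2+\mathrm{Ai}'(-w)^2$ and $w=2^{1/3}(\kappa_0-m)\kappa_0^{-1/3}$. A one-line computation using $\mathrm{Ai}''(z)=z\,\mathrm{Ai}(z)$ gives the clean identity $g'(w)=\mathrm{Ai}(-w)^2\ge0$, so $S_m$ is monotone in $m$ apart from flat spots at the zeros of $\mathrm{Ai}(-w)$, i.e.\ at $\kappa_0=j_{m,\ell}$. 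At such a flat spot $\frac{\rmd}{\rmd m}S_m=0$ while $|H_m^{(1)}(\kappa)|$ is still increasing, which forces $\sigma_m$ upward and is exactly the mechanism behind Theorem~\ref{theorem:MAIN}; the last such flat spot sits at $\kappa_0=j_{m,1}$, giving $\BW_-$. Consequently all non-monotonicity of $\sigma_m$ is confined to the turning-order window $|m-\kappa_0|=O(\kappa_0^{1/3})$, and the conjecture reduces to locating, inside this window, the last sign change of the forward difference $\sigma_{m+1}-\sigma_m$.

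Inside the window I would insert the matching Airy expansion for the propagation factor. When $R=R_0$ the modulus is itself at its turning order, $|H_m^{(1)}(\kappa_0)|^2\sim c\,\kappa_0^{-2/3}\bigl[\mathrm{Ai}(-w)^2+\mathrm{Bi}(-w)^2\bigr]$, and $\sigma_m^2\sim c\,\kappa_0^{-2}\bigl[\mathrm{Ai}(-w)^2+\mathrm{Bi}(-w)^2\bigr]g(w)$; when $R>R_0$ the order $m\approx\kappa_0$ is still below $\kappa$, the modulus varies only algebraically, and $\sigma_m^2\sim c\,|H_m^{(1)}(\kappa)|^2 g(w)$. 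In either case the last extremum is fixed by the competition between $g'(w)=\mathrm{Ai}(-w)^2$ and the positive logarithmic derivative of the propagation factor. The target is to prove that this last sign change always occurs at $w\ge w_+$, where $w_+$ is the value pinned by the first zero of $\mathrm{Bi}$, equivalently by the first zero of $Y_m$ at argument $\kappa_0$, i.e.\ by $y_{m,1}=\kappa_0$. The interlacing $y_{m,1}<j_{m,1}$ then yields $\BW_-\le\BW_+$ and places the true cut-off between the two bounds: the limit $R\to\infty$ realises $\BW_-$, while the coincident case $R=R_0$ furnishes the largest cut-off, bounded above by $\BW_+$. Inverting the zero asymptotic $y_{m,1}\sim m+a_+m^{1/3}$, $a_+\approx0.931577$, would then furnish a closed form $\widetilde{\BW}_+$ exactly parallel to Corollary~\ref{corollary:ceil}.

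The step I expect to be the genuine obstacle is making this turning-order analysis uniform and sign-definite across the \emph{whole} admissible geometry $R\ge R_0$. For $\kappa-\kappa_0\gg\kappa_0^{1/3}$ the modulus is slowly varying and the cut-off is pushed toward $\BW_-$, while at $R=R_0$ the controlling object is the product $[\mathrm{Ai}(-w)^2+\mathrm{Bi}(-w)^2]g(w)$; the hard case is the intermediate one, $\kappa-\kappa_0=O(\kappa_0^{1/3})$, in which the source turning order $m=\kappa_0$ and the measurement turning order $m=\kappa$ fall within a single Airy width of each other, so that the two uniform expansions overlap and their relative phase must be carried along. Certifying the \emph{discrete} inequality $\sigma_{m+1}<\sigma_m$ for every integer $m\ge\BW_+$ — rather than merely to leading asymptotic order — therefore seems to demand a joint uniform expansion of $|H_m^{(1)}(\kappa)|^2 S_m^2$ with explicit, sign-definite remainders valid simultaneously in $m$ and in $R/R_0$, and a verification that the finite-$\kappa_0$ corrections never push the last extremum past the clean threshold $y_{m,1}=\kappa_0$. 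Securing such remainder bounds is, I believe, precisely what keeps the upper bound a conjecture rather than a theorem.
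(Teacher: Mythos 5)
First, be aware that the paper does not prove this statement at all: it is advanced as Conjecture~\ref{conjecture:y} on the basis of the structural analogy with Theorem~\ref{theorem:MAIN} and of extensive numerical experimentation (validated in~\sref{section:numericalvalidation}), and the conclusion explicitly lists proving it as future work. So the only standard your proposal can be measured against is whether it actually closes that gap --- and it does not, as you yourself concede in your final paragraph. Your preparatory steps are sound, and they are in fact the Airy-asymptotic counterparts of the machinery the paper uses for the \emph{lower} bound: your $S_m$ is exactly $R_0A_m(\kappa_0)/\sqrt{2}$ from Lemma~\ref{lemma:SVD}, Nicholson's integral for the monotonicity of $|H_m^{(1)}(\kappa)|$ is Remark~\ref{remark:h2_increasing}, and your identity $g'(w)=\mathrm{Ai}(-w)^2$ (which is correct, by $\mathrm{Ai}''(z)=z\,\mathrm{Ai}(z)$) is the uniform-asymptotic analogue of Lemma~\ref{lemma:A_variation}: flat spots of the source factor where $J_\mu(\kappa_0)=0$, at which the increasing propagation factor forces $\sigma_{m}\le\sigma_{m+1}$. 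But the decisive step --- that the \emph{last} sign change of the forward difference $\sigma_{m+1}-\sigma_m$ occurs no later than the index where $y_{m,1}\ge\kappa_0$, uniformly in $R\ge R_0$ --- is not argued anywhere; it is restated in Airy variables (``the last sign change always occurs at $w\ge w_+$, pinned by the first zero of $\mathrm{Bi}$'') and then deferred to unproven, sign-definite remainder estimates. Leading-order Olver asymptotics cannot certify discrete inequalities $\sigma_{m+1}<\sigma_m$ at finite $\kappa_0$, so what you have written is a careful reformulation of the conjecture, not a proof of it.

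Second, one element of your heuristic appears to conflict with the paper's own numerics, so you should not build the eventual proof on it. You predict that the true cut-off migrates with the measurement radius, from $\BW_-$ in the limit $R\to\infty$ up to (nearly) $\BW_+$ at $R=R_0$. But~\sref{section:numericalvalidation} reports that the computed bandwidth is \emph{unchanged} ($\BW=27$) when $\kappa$ is increased tenfold from $10\pi$ to $100\pi$ at fixed $\kappa_0=10\pi$, and over the whole sweep with $\kappa=\kappa_0$ the bound $\BW_+$ still overestimates $\BW$ by up to $4$ (mean error $+3.02$); neither endpoint of your proposed interval is attained in the data. The competition you set up between $g'(w)=\mathrm{Ai}(-w)^2$ and the logarithmic derivative of $|H_m^{(1)}(\kappa)|$ is the right object to study, but the numerics suggest its outcome is essentially independent of $R$, which is a stronger (and different) statement than the uniform-in-$R$ estimate you propose to chase. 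In short: correct preliminaries and a genuinely promising reduction to the turning-point window, but the core difficulty is left exactly where the paper left it.
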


In~\sref{section:spectralanalysis} we analyze the singular value spectrum of the forward operator $F$. In particular, we prove Theorem~\ref{theorem:MAIN} and Corollary~\ref{corollary:ceil} in~\sref{subsection:proofof}. We validate the bounds $\BW_-$ and $\BW_+$ on the bandwidth $\BW$ numerically in~\sref{section:numericalvalidation}, and discuss some implications of Theorem~\ref{theorem:MAIN} in~\sref{section:discussion}. A conclusion and suggestions for further work are given in~\sref{section:conclusionand}.

\section{Spectral analysis of the forward operator}\label{section:spectralanalysis}

The function $(\rmi/4)H_0^{(1)}(k|x|)$, $x\in\mathbb{R}^2$, is the radial outgoing fundamental solution of the Helmholtz operator in the plane, with singularity at the origin. Recall that $H_0^{(1)}=J_0+\rmi Y_0$ is the Hankel function of zero order and of the first kind. As in~\cite{Bao-2010}, introduce the forward operator
\begin{equation*}
Fs(x)=\int_{y\in D_0}H_0^{(1)}(k_0|x-y|)s(y),\quad x\in\partial D,\,s\in L^2(D_0),
\end{equation*}
that maps sources $s$ to the traces at $\partial D$ of the corresponding radiated fields. It is well-known~\citep{Bao-2010} that $F:L^2(D_0)\rightarrow L^2(\partial D)$ is compact. The adjoint $F^{\ast}$ is defined by
\begin{equation*}
F^{\ast}U(y)=\int_{x\in\partial D}H_0^{(2)}(k|x-y|)U(y),\quad y\in D_0,\,\,U\in L^2(\partial D),
\end{equation*}
where $H_0^{(2)}=J_0-\rmi Y_0$ is the Hankel function of zero order and of the second kind.

\subsection{A singular system of $F$}\label{subsection:asingular}

\citet{Bao-2010} derived a singular system of the forward operator $F$. We here slightly improve a part of their Proposition 2.1:
\begin{lemma}\label{lemma:SVD}
The forward operator $F$ admits the singular value decomposition
\begin{equation*}
F=\sigma_0(\cdot,\psi_0)_{L^2(D)}\phi_0+\sum_{m\in\mathbb{N}}\sigma_m\left[(\cdot,\psi_m)_{L^2(D)}\phi_m+(\cdot,\psi_{-m})_{L^2(D)}\phi_{-m}\right],
\end{equation*}
where
\begin{equation}\label{equation:xi_m}
\sigma_m=\sqrt{2R}\pi R_0|H_m^{(1)}(\kappa)|A_m(\kappa_0),\quad m\in\mathbb{N}_0,
\end{equation}
and
\begin{eqnarray*}
\psi_m(y)&=&(\sqrt{\pi}R_0A_m(\kappa_0))^{-1}J_m(k|y|)\rme^{\rmi m\arg y},\\
\phi_m(x)&=&(2\pi R)^{-1/2}\rme^{\rmi\arg H_m^{(1)}(\kappa)}\rme^{\rmi m\arg x},
\end{eqnarray*}
for $m\in\mathbb{Z}$, $x\in\partial D$ and $y\in D_0$.
Here
\begin{eqnarray*}
A_m(\kappa_0)&=&\sqrt{J_m(\kappa_0)^2-J_{m-1}(\kappa_0)J_{m+1}(\kappa_0)}\\&=&\sqrt{J_m(\kappa_0)^2+J_{m+1}(\kappa_0)^2-\frac{2m}{\kappa_0}J_m(\kappa_0)J_{m+1}(\kappa_0)}
\end{eqnarray*}
for $m\in\mathbb{Z}$.
\end{lemma}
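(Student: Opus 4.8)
The plan is to diagonalise $F$ by separating variables in polar coordinates, the essential tool being Graf's addition theorem for the zeroth-order Hankel function: writing the two points in polar form as $x=(|x|,\arg x)$ and $y=(|y|,\arg y)$, one has, for $|y|<|x|$,
\begin{equation*}
H_0^{(1)}(k|x-y|)=\sum_{m\in\mathbb{Z}}H_m^{(1)}(k|x|)\,J_m(k|y|)\,\rme^{\rmi m(\arg x-\arg y)}.
\end{equation*}
Since every $y\in D_0$ has $|y|<R_0\le R=|x|$ for $x\in\partial D$, the hypothesis $|y|<|x|$ holds throughout the domain of integration, even in the borderline case $R=R_0$. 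Inserting the expansion into $Fs$, using $H_m^{(1)}(k|x|)=H_m^{(1)}(\kappa)$ on $\partial D$, and integrating term by term gives
\begin{equation*}
Fs(x)=\sum_{m\in\mathbb{Z}}H_m^{(1)}(\kappa)\,\rme^{\rmi m\arg x}\int_{D_0}J_m(k|y|)\,\rme^{-\rmi m\arg y}\,s(y)\,\mathrm{d}y,
\end{equation*}
so the only features of $s$ that survive are its moments against the separated functions $J_m(k|\cdot|)\rme^{\rmi m\arg(\cdot)}$.

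I would then normalise the two families. On $\partial D$ the functions $\rme^{\rmi m\arg x}$ are mutually orthogonal with squared norm $2\pi R$, which fixes the constant $(2\pi R)^{-1/2}$ in $\phi_m$; the unimodular factor $\rme^{\rmi\arg H_m^{(1)}(\kappa)}$ is inserted precisely so that $H_m^{(1)}(\kappa)=|H_m^{(1)}(\kappa)|\,\rme^{\rmi\arg H_m^{(1)}(\kappa)}$ splits off a nonnegative coefficient. On $D_0$ the angular integral again enforces orthogonality in $m$, while the radial norm is supplied by Lommel's integral
\begin{equation*}
\int_0^{R_0}J_m(kr)^2\,r\,\mathrm{d}r=\frac{R_0^2}{2}\left[J_m(\kappa_0)^2-J_{m-1}(\kappa_0)J_{m+1}(\kappa_0)\right]=\frac{R_0^2}{2}A_m(\kappa_0)^2,
\end{equation*}
whence $\|J_m(k|\cdot|)\rme^{\rmi m\arg(\cdot)}\|_{L^2(D_0)}^2=\pi R_0^2A_m(\kappa_0)^2$ and the normalising constant for $\psi_m$ is $(\sqrt{\pi}R_0A_m(\kappa_0))^{-1}$, as stated. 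I would note that $A_m(\kappa_0)>0$ for every $m$, since $J_m(kr)$ does not vanish identically on $(0,R_0)$, so each $\psi_m$ is well defined.

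With both normalisations fixed, substituting $\psi_m$ for $s$ collapses the series to its $m$-th term and produces $F\psi_m=\sigma_m\phi_m$ with $\sigma_m=\sqrt{2R}\,\pi R_0|H_m^{(1)}(\kappa)|A_m(\kappa_0)$, the factor $(2\pi R)^{1/2}$ from the $\phi_m$-normalisation combining with $\sqrt{\pi}R_0A_m(\kappa_0)$ from the radial integral. Equivalently, feeding a general $s$ into the two displays above yields at once the operator identity $F=\sum_m\sigma_m(\cdot,\psi_m)_{L^2(D_0)}\phi_m$; together with the orthonormality of $\{\psi_m\}$ and $\{\phi_m\}$ and the nonnegativity of $\sigma_m$, this is exactly the asserted singular value decomposition (the $\psi_m$ need not be complete in $L^2(D_0)$, any $s$ orthogonal to all of them lying in $\ker F$). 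The same computation applied to $F^{\ast}$, whose kernel $H_0^{(2)}=\overline{H_0^{(1)}}$ gives the conjugate Graf series, returns $F^{\ast}\phi_m=\sigma_m\psi_m$ and confirms the roles of the two families. Finally, the recurrence $J_{m-1}(\kappa_0)=\frac{2m}{\kappa_0}J_m(\kappa_0)-J_{m+1}(\kappa_0)$ turns the bracket in Lommel's integral into the second, manifestly symmetric expression for $A_m(\kappa_0)$, which is the slight sharpening over Proposition~2.1 of \citet{Bao-2010}.

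I expect the only genuinely delicate step to be the justification of the term-by-term integration. For $|y|\le R_0-\varepsilon<R=|x|$ the Graf series converges absolutely and uniformly, controlled by the rapid decay of $J_m(k|y|)$ against the growth of $|H_m^{(1)}(\kappa)|$ in $m$; one then removes the cutoff by dominated convergence, using $s\in L^2(D_0)$ and the Cauchy--Schwarz inequality in $y$. Everything else reduces to bookkeeping with the orthogonality relations and with the Lommel and recurrence identities for Bessel functions.
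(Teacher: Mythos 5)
Your proof is correct, but it reaches the singular value decomposition by a genuinely different route than the paper. The paper never expands $F$ itself: it computes the kernel of the self-adjoint operator $F^{\ast}F$, inserting Graf's addition theorem into $F^{\ast}Fs(y)=\int_{z\in D_0}s(z)\int_{x\in\partial D}H_0^{(1)}(k|x-z|)H_0^{(2)}(k|x-y|)$, where the double angular sum collapses through the orthogonality of $\rme^{\rmi(m+n)\arg x}$ over $\partial D$; it then normalises the resulting eigenfunctions with the same Lommel integral and recursion identity you use, obtaining a spectral decomposition of $F^{\ast}F$, and converts this into a singular system by citing Theorem 4.7 of Colton and Kress, after which $\phi_m=\sigma_m^{-1}F\psi_m$ is evaluated explicitly (a second application of Graf's theorem). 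You instead expand $Fs$ directly in the Graf series, normalise both families, verify $F\psi_m=\sigma_m\phi_m$ (your constant bookkeeping is right: $\sqrt{\pi}R_0A_m\cdot\sqrt{2\pi R}=\sqrt{2R}\,\pi R_0A_m$), and read off the operator identity $F=\sum_m\sigma_m(\cdot,\psi_m)\phi_m$; together with orthonormality of both families, nonnegativity of the $\sigma_m$, and your remark that anything orthogonal to all $\psi_m$ lies in $\ker F$, this is a valid, self-contained construction of the singular system. What each approach buys: the paper's detour through $F^{\ast}F$ fits the standard compact-operator framework (and parallels Proposition 2.1 of Bao, Lin and Triki, which the lemma refines), delegating to an abstract theorem the verification that the exhibited system really is an SVD; your route avoids that appeal and needs Graf's theorem only once, at the cost of having to record those verifications yourself, which you do. A further point in your favour is that you flag and sketch the justification of the term-by-term integration (uniform convergence of the Graf series for $|y|\le R_0-\varepsilon$ followed by a limiting argument in $\varepsilon$, delicate only in the borderline case $R=R_0$), an interchange the paper performs silently.
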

Our slight improvement of Proposition 2.1 of~\citet{Bao-2010} consists in explicitly evaluating the integral $\int_{\varrho=0}^{R_0}\varrho J_m^2(k\varrho)$, occurring in $\sigma_m$ and $\psi_m$, in terms of $A_m(\kappa_0)$. This explicit evaluation is crucial to our proof of Theorem~\ref{theorem:MAIN}. We also note that our expressions for the singular vectors $\phi_m$, as well as the singular values $\sigma_m$, differ from~\cite{Bao-2010} in that they are only proportional to those given in that reference.

\begin{proof}[Proof of Lemma~\ref{lemma:SVD}]
For $s\in L^2(D_0)$ and $y\in D_0$ we have
\begin{equation}\label{equation:F*F}
F^{\ast}Fs(y)=\int_{z\in D_0}s(z)\int_{x\in\partial D}H_0^{(1)}(k|x-z|)H_0^{(2)}(k|x-y|).
\end{equation}
A special case of the Graf addition theorem \citep[Eq. 9.1.79, p. 363]{AbramowitzStegun} reads
\begin{equation*}
H_0^{(1)}(k|x-y|)=\sum_{m\in\mathbb{Z}}H_m^{(1)}(\kappa)J_m(k|y|)\rme^{\rmi m(\arg x-\arg y)},\quad x\in\partial D,\, y\in D_0.
\end{equation*}
Similar to~\cite{Bao-2010}, inserting this in~\eref{equation:F*F} we get
\begin{eqnarray*}
\hspace{-1cm}F^{\ast}Fs(y)&=\sum_{m,n\in\mathbb{Z}}H_m^{(1)}(\kappa)H_n^{(2)}(\kappa)J_n(k|y|)\rme^{-\rmi n\arg y}\\&\times\int_{z\in D_0}s(z)J_m(k|z|)\rme^{-\rmi m\arg z}\int_{x\in\partial D}\rme^{\rmi(m+n)\arg x}\\&=2\pi R\sum_{m\in\mathbb{Z}}|H_m^{(1)}(\kappa)|^2J_m(k|y|)\rme^{\rmi m\arg y}\int_{z\in D_0}s(z)J_m(k|z|)\rme^{-\rmi m\arg z},
\end{eqnarray*}
since $J_{-m}=(-1)^mJ_m$ and $Y_{-m}=(-1)^mY_m$ for all integer $m$. This gives an eigendecomposition of the operator $F^{\ast}F$; to normalize the eigenvectors, we note that \citet[Eq. 5.54.2, p. 629]{GradRyzh} gives
\begin{equation*}
\int\varrho J_m(k\varrho)^2=\frac{\varrho^2}{2}\left(J_m(k\varrho)^2-J_{m-1}(k\varrho)J_{m+1}(k\varrho)\right),\quad m\in\mathbb{Z},
\end{equation*}
and the recursion formula for cylinder functions \citep[Eq. 8.471.1, p. 926]{GradRyzh} implies
\begin{equation}\label{equation:recursion}
J_{m-1}(\kappa)+J_{m+1}(\kappa)=\frac{2m}{\kappa}J_m(\kappa),\quad m\in\mathbb{Z}.
\end{equation}
Thus,
\begin{eqnarray*}
\hspace{-20mm}\int_{\varrho=0}^{R_0}\varrho J_m(k\varrho)^2&=R_0^2\int_{\varrho=0}^1\varrho J_m(\kappa_0\varrho)^2=\frac{R_0^2}{2}\left(J_m(\kappa_0)^2-J_{m-1}(\kappa_0)J_{m+1}(\kappa_0)\right)\\&=\frac{R_0^2}{2}\left(J_m(\kappa_0)^2+J_{m+1}(\kappa_0)^2-\frac{2m}{\kappa_0}J_m(\kappa_0)J_{m+1}(\kappa_0)\right)=\frac{R_0^2A_m(\kappa_0)^2}{2},
\end{eqnarray*}
and $F^{\ast}F$ admits the spectral decomposition \begin{equation*}F^{\ast}F=\sigma_0^2(\cdot,\psi_0)_{L^2(D_0)}+\sum_{m\in\mathbb{N}}\sigma_m^2\left[(\cdot,\psi_m)_{L^2(D_0)}\psi_m+(\cdot,\psi_{-m})_{L^2(D_0)}\psi_{-m}\right].\end{equation*} Evidently, $\sigma_0^2$ has multiplicity one and all the other eigenvalues $\sigma_m^2$, $m\in\mathbb{N}$, have multiplicity two. The lemma now follows from Theorem 4.7 on p. 100 of~\citet{ColtonKress}; it here just remains to compute
\begin{eqnarray*}
\phi_m(x)&=&\sigma_m^{-1}F\psi_m(x)=\frac{\int_{y\in D_0}H_0^{(1)}(k|x-y|)J_m(k|y|)\rme^{\rmi m\arg y}}{\sqrt{2R}\pi^{3/2}R_0^2|H_m^{(1)}(\kappa)|A_m(\kappa_0)^2}\\&=&\frac{\sum_{\nu\in\mathbb{Z}}H_{\nu}^{(1)}(\kappa)\rme^{\rmi\nu\arg x}\int_{\varrho=0}^{R_0}\varrho J_{\nu}(k\varrho)J_m(k\varrho)\int_{\theta=0}^{2\pi}\rme^{\rmi\theta(m-\nu)}}{\sqrt{2R}\pi^{3/2}R_0^2|H_m^{(1)}(\kappa)|A_m(\kappa_0)^2}\\&=&(2\pi R)^{-1/2}\rme^{\rmi\arg H_m^{(1)}(\kappa)}\rme^{\rmi m\arg x},\quad x\in\partial D,\,\,m\in\mathbb{N}_0.
\end{eqnarray*}
\end{proof}

Figure 3 shows the first 71 nonnegative-index singular values of the forward operator $F$ with size parameters $\kappa=\kappa_0=10\pi$.
\begin{figure}\label{figure:Fig3}
\begin{center}
\includegraphics[width=0.52\textwidth]{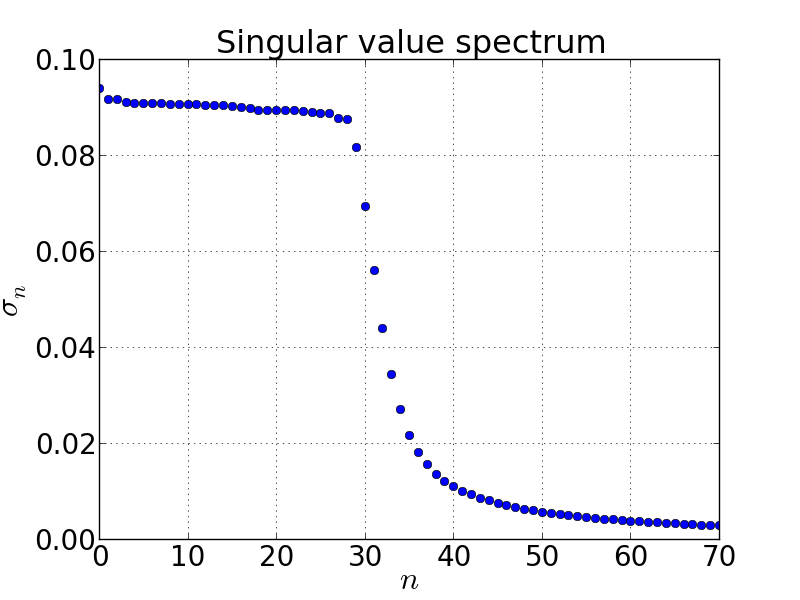}
\hspace{-8mm}\includegraphics[width=0.52\textwidth]{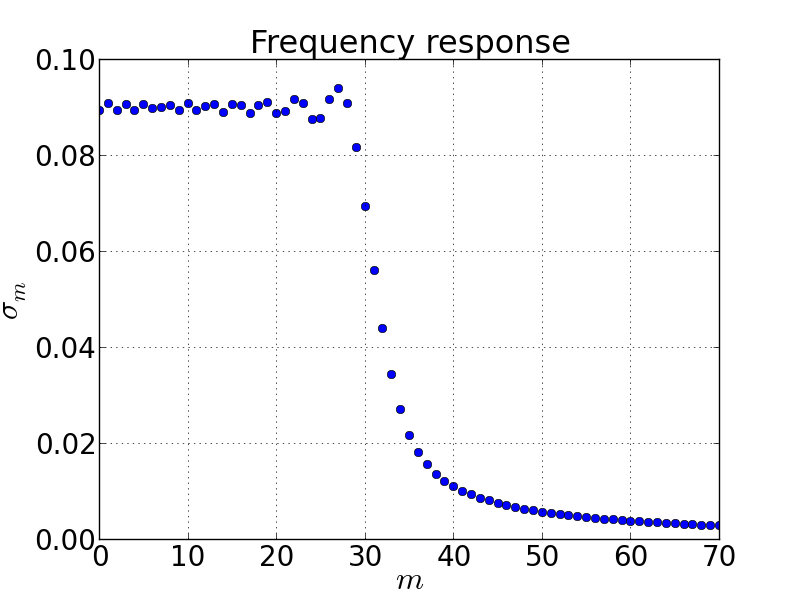}
\caption{Part of the singular value spectrum of the forward operator $F$ for $\kappa=\kappa_0=10\pi$. Left: the singular values $\sigma_n$, $0\le n\le70$, ordered in a decreasing sequence. Right: the same singular values ordered according to the angular frequency $m$ of the right singular vector $\phi_m$.}
\end{center}
\end{figure}
Clearly, the forward operator is a low-pass filter with respect to the singular values $\sigma_m$, with bandwidth $\BW=27$. We quantify the frequency response of this filter in~\sref{subsection:proofof}.

\subsection{Proof of Theorem~\ref{theorem:MAIN} and of Corollary~\ref{corollary:ceil}}\label{subsection:proofof}
In this section we prove the lower bound $\BW_-$ on the bandwidth $\BW$ given in Theorem~\ref{theorem:MAIN}, and the approximate value of $\BW_-$ given in Corollary~\ref{corollary:ceil}. For completeness, we first prove that the distance between the zeros of the function $0\le\mu\mapsto J_{\mu}(\kappa_0)$ is greater than $1$.
\begin{lemma}\label{lemma:only_one_zero}
If $\mu_2>\mu_1\ge0$ and $J_{\mu_1}(\kappa_0)=J_{\mu_2}(\kappa_0)=0$ then $\mu_2-\mu_1>1$.
\end{lemma}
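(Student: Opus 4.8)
The plan is to avoid working with the order $\mu$ directly and instead reduce everything to two classical monotonicity properties of the ordinary (radial) positive zeros of Bessel functions. For a real order $\nu\ge0$ and $s\in\mathbb{N}$, let $j_{\nu,s}$ denote the $s$-th positive zero of $J_\nu$, consistent with the notation $j_{m,1}$ of the paper. Then $J_\mu(\kappa_0)=0$ is equivalent to saying that $\kappa_0=j_{\mu,s}$ for some $s\in\mathbb{N}$, i.e. that $\kappa_0$ is one of the positive zeros of $J_\mu$. The two facts I would invoke are: (i) for each fixed $s$ the map $\nu\mapsto j_{\nu,s}$ is strictly increasing on $[0,\infty)$; and (ii) the positive zeros of $J_\nu$ and $J_{\nu+1}$ interlace, so that $j_{\nu+1,s}<j_{\nu,s+1}$ for every $\nu\ge0$ and $s\in\mathbb{N}$. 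Both hold for real orders: property (i) is classical (Watson), and property (ii) follows from the identity $\frac{d}{dx}\bigl(x^{-\nu}J_\nu(x)\bigr)=-x^{-\nu}J_{\nu+1}(x)$ together with Rolle's theorem applied between consecutive zeros of $J_\nu$ (the standard interlacing of Bessel zeros).

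With these in hand I would argue by contradiction. Suppose $\mu_2>\mu_1\ge0$ satisfy $J_{\mu_1}(\kappa_0)=J_{\mu_2}(\kappa_0)=0$ but $\mu_2-\mu_1\le1$. Write $\kappa_0=j_{\mu_1,s}$ and $\kappa_0=j_{\mu_2,t}$ for suitable $s,t\in\mathbb{N}$. The first step is to pin down the indices: since $\nu\mapsto j_{\nu,r}$ is strictly increasing by (i) and $\mu_2>\mu_1$, the two radial zeros can take the common value $\kappa_0$ only if $t\le s-1$. Indeed, if $t\ge s$ then $j_{\mu_2,t}\ge j_{\mu_2,s}>j_{\mu_1,s}=\kappa_0$, which is impossible; in particular $s\ge2$.

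The contradiction then emerges by chaining three inequalities. From $\mu_2\le\mu_1+1$ and (i), $\kappa_0=j_{\mu_2,t}\le j_{\mu_1+1,t}$; the interlacing (ii) gives $j_{\mu_1+1,t}<j_{\mu_1,t+1}$; and $t+1\le s$ together with the monotonicity of $j_{\mu_1,\cdot}$ in its second index gives $j_{\mu_1,t+1}\le j_{\mu_1,s}=\kappa_0$. Concatenating these yields $\kappa_0\le j_{\mu_1+1,t}<j_{\mu_1,t+1}\le\kappa_0$, i.e. $\kappa_0<\kappa_0$, which is absurd. Hence $\mu_2-\mu_1>1$, as claimed.

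I expect the only delicate point to be the justification of (i) and especially (ii) for \emph{non-integer} orders $\nu$, since the recurrence manipulations elsewhere in the paper are stated only for integer $m$; the interlacing is the crux, but the Rolle's-theorem argument above is valid for all real $\nu\ge0$ and keeps the proof self-contained. The remaining subtlety is the index bookkeeping—deducing $t\le s-1$ and hence $t+1\le s$—but this is forced by monotonicity alone and involves no computation.
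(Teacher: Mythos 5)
Your proof is correct, and it takes a genuinely different route from the paper's. The paper also starts from the two classical facts you invoke --- strict monotonicity of $\mu\mapsto j_{\mu,n}$ in the order and interlacing --- but uses them only to settle a base case (namely that $j_{\mu,n}=j_{0,n+1}$ forces $\mu>1$); the core of the paper's argument is analytic: it invokes Watson's integral formula $dj_{\mu,n}/d\mu=2j_{\mu,n}\int_{t=0}^{\infty}K_0(2j_{\mu,n}\sinh t)\rme^{-2\mu t}$, substitutes to exhibit the right-hand side as an increasing function of the value of the zero, and deduces the differential inequality $dj_{\mu,n}/d\mu<dj_{\mu,n+1}/d\mu$, i.e.\ the zero curves spread apart as $\mu$ grows, so the horizontal gap between consecutive curves stays above its base value $>1$. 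Your argument replaces this calculus entirely with index bookkeeping: after pinning down $t\le s-1$ by monotonicity, the chain $\kappa_0=j_{\mu_2,t}\le j_{\mu_1+1,t}<j_{\mu_1,t+1}\le j_{\mu_1,s}=\kappa_0$ gives an immediate contradiction. The chain is valid: the first inequality uses $\mu_2\le\mu_1+1$ and monotonicity in the order, the strict middle inequality is the one-sided interlacing $j_{\nu+1,t}<j_{\nu,t+1}$ (which, as you note, holds for all real $\nu\ge0$ by Rolle's theorem applied to $x^{-\nu}J_{\nu}(x)$, whose derivative is $-x^{-\nu}J_{\nu+1}(x)$: each of the $t$ intervals between consecutive zeros of $J_{\nu}$ up to $j_{\nu,t+1}$ contains a zero of $J_{\nu+1}$), and the last inequality uses $t+1\le s$ together with the ordering of the zeros of a single Bessel function. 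What each approach buys: yours is shorter, fully elementary, and self-contained modulo the two classical facts (both of which the paper cites anyway, from Watson and from P\'almai--Apagyi); the paper's derivative argument is heavier but yields the stronger quantitative statement that the gaps between consecutive zero curves are increasing in $\mu$, not merely bounded below by $1$.
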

\begin{proof}
Let $n\in\mathbb{N}$. The interlacing property of the zeros of Bessel functions (see, e.g.,~\citet{Palmai-2011}) implies $j_{0,n+1}>j_{1,n}$. Since $\mu\mapsto j_{\mu,n}$ is strictly increasing with $\mu$, $j_{\mu,n}=j_{0,n+1}$ implies $j_{\mu,n}>j_{1,n}$, hence $\mu>1$. As illustrated in~\fref{figure:Bessel_zeros}, to show that $j_{\mu_1,n+1}=j_{\mu_2,n}$ implies $\mu_2-\mu_1>1$, it now suffices to establish that
\begin{equation}\label{equation:ddmu}
\frac{dj_{\mu,n}}{d\mu}<\frac{dj_{\mu,n+1}}{d\mu}\quad\text{for all }\mu\ge0,\,n\in\mathbb{N}.
\end{equation}
\begin{figure}
\begin{center}
\includegraphics[scale=0.5]{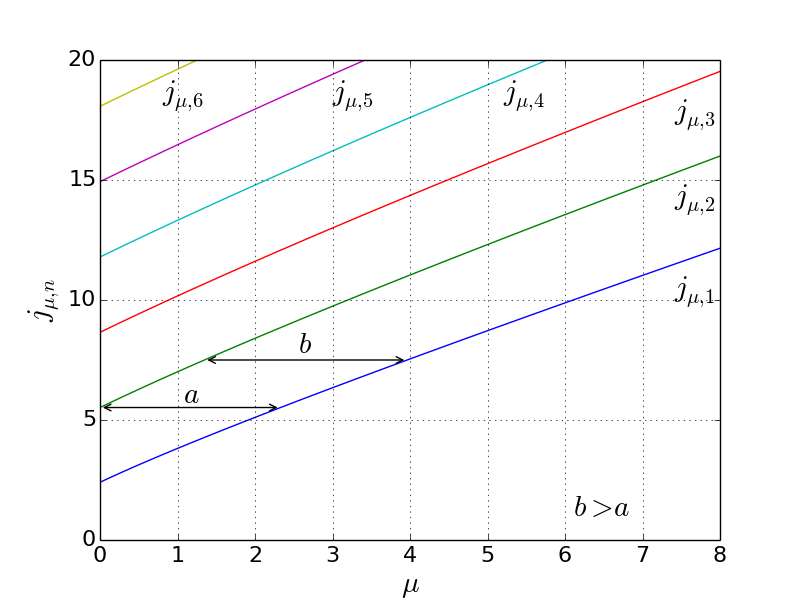}
\caption{The zeros of the function $0\le\mu\mapsto J_{\mu}(\kappa_0)$ diverge, see Lemma~\ref{lemma:only_one_zero}.}\label{figure:Bessel_zeros}
\end{center}
\end{figure}
For nonnegative order $\mu$, the $n$'th zero $j_{\mu,n}$ of the Bessel function $J_{\mu}$ satisfies~\citep[pp. 508--510]{Watson-1945}
\[
\frac{dj_{\mu,n}}{d\mu}=2j_{\mu,n}\int_{t=0}^{\infty}K_0(2j_{\mu,n}\sinh t)\rme^{-2\mu t}.
\]
Substituting $q=2j_{\mu,n}\sinh t$ and using that $j_{\mu,n}>0$, $\exp({\rm arcsinh}\,\tau)=\tau+\sqrt{1+\tau^2}$, as well as that $\cosh{\rm arcsinh}\,\tau=\sqrt{1+\tau^2}$, we get
\[
\frac{dj_{\mu,n}}{d\mu}=\int_{q=0}^{\infty}K_0(q)\left(q/2j_{\mu,n}+\sqrt{1+q^2/4j_{\mu,n}^2}\right)^{-2\mu}\left(1+q^2/4j_{\mu,n}^2\right)^{-1/2}.
\]
Setting
\[
f(c)=\int_{q=0}^{\infty}K_0(q)(q/2c+\sqrt{1+q^2/4c^2})^{-2\mu}\left(1+q^2/4c^2\right)^{-1/2},\quad c>0,
\]
we find
\[
\frac{\partial f}{\partial c}(c)=2^{2\mu+1}c^{2\mu}\int_{q=0}^{\infty}\frac{qK_0(q)\left(2\mu\sqrt{4c^2+q^2}+q\right)}{(4c^2+q^2)^{3/2}\left( \sqrt {4c^2+q^2}+q \right)^{2\mu}}>0\quad{\rm for}\,\,c>0.
\]
Finally, for any $\mu\ge0$ and $n\in\mathbb{N}_0$, we have $j_{\mu,n}<j_{\mu,n+1}$, so~\eref{equation:ddmu} indeed holds.
\end{proof}
We can now link the variation of the function $m\mapsto A_m(\kappa_0)$ with that of the Bessel function of the first kind. Fix $m\in\mathbb{N}_0$.
\begin{lemma}\label{lemma:A_variation}
If $J_{\xi}(\kappa_0)=0$ for some $\xi\in[m,m+1]$ then $A_m(\kappa_0)\le A_{m+1}(\kappa_0)$.
\end{lemma}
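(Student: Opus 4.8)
The plan is to reduce the inequality to a sign condition by means of an explicit identity for the \emph{squares} $A_m(\kappa_0)^2$, and then to extract that sign condition from the hypothesis using the two facts already available, namely the two equivalent formulas for $A_m(\kappa_0)^2$ in Lemma~\ref{lemma:SVD} and the zero-spacing Lemma~\ref{lemma:only_one_zero}.

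First I would establish the key algebraic identity
\[
A_{m+1}(\kappa_0)^2-A_m(\kappa_0)^2=-\frac{2}{\kappa_0}J_m(\kappa_0)J_{m+1}(\kappa_0).
\]
Starting from $A_m(\kappa_0)^2=J_m(\kappa_0)^2-J_{m-1}(\kappa_0)J_{m+1}(\kappa_0)$ and the analogous expression for $A_{m+1}(\kappa_0)^2$, I would substitute $J_{m-1}(\kappa_0)=\frac{2m}{\kappa_0}J_m(\kappa_0)-J_{m+1}(\kappa_0)$ and $J_{m+2}(\kappa_0)=\frac{2(m+1)}{\kappa_0}J_{m+1}(\kappa_0)-J_m(\kappa_0)$, both instances of the recursion~\eref{equation:recursion}. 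The quadratic terms $J_m(\kappa_0)^2$ and $J_{m+1}(\kappa_0)^2$ cancel and only the stated cross-term survives. Since $A_m(\kappa_0)\ge0$ and $A_{m+1}(\kappa_0)\ge0$, this reduces the lemma to proving that the hypothesis forces $J_m(\kappa_0)J_{m+1}(\kappa_0)\le0$.

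Next I would study the sign of the continuous function $g(\xi)=J_\xi(\kappa_0)$ on $\xi\ge0$. By Lemma~\ref{lemma:only_one_zero} the zeros of $g$ are spaced more than $1$ apart, so the hypothesised zero $\xi_0\in[m,m+1]$ is the \emph{only} zero of $g$ there; hence $g$ has constant sign on each of $[m,\xi_0)$ and $(\xi_0,m+1]$. If $\xi_0\in\{m,m+1\}$ then one of $J_m(\kappa_0),J_{m+1}(\kappa_0)$ vanishes and the product is $0$. If instead $\xi_0\in(m,m+1)$, I would show that $g$ changes sign at $\xi_0$: writing $\kappa_0=j_{\xi_0,n}$ for some $n\in\mathbb{N}$ and using the strict monotonicity and continuity of $\mu\mapsto j_{\mu,n}$ recorded in the proof of Lemma~\ref{lemma:only_one_zero}, for $\xi$ slightly below $\xi_0$ one has $j_{\xi,n}<\kappa_0<j_{\xi,n+1}$, whereas for $\xi$ slightly above one has $\kappa_0<j_{\xi,n}$ with $j_{\xi,n-1}<\kappa_0$ when $n\ge2$. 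Thus $\kappa_0$ lands in two adjacent sign-alternation intervals of $J_\xi$ on the two sides of $\xi_0$, so $g$ takes opposite signs there. Propagating these constant signs out to the endpoints gives $J_m(\kappa_0)J_{m+1}(\kappa_0)<0$. Combining with the identity yields $A_{m+1}(\kappa_0)^2-A_m(\kappa_0)^2\ge0$, hence $A_m(\kappa_0)\le A_{m+1}(\kappa_0)$.

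The main obstacle is the sign-change step. The delicate point is ruling out a zero of $g$ at which the graph merely touches the axis without crossing; the monotonicity-of-zeros argument is precisely what guarantees a genuine crossing, and the zero-spacing Lemma~\ref{lemma:only_one_zero} is what allows me to transport that local sign information from a neighbourhood of $\xi_0$ out to the integer endpoints $m$ and $m+1$. The algebraic identity, while central, is then a routine cancellation.
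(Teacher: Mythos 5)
Your proposal is correct and follows essentially the same route as the paper: the identity $A_m(\kappa_0)^2-A_{m+1}(\kappa_0)^2=\frac{2}{\kappa_0}J_m(\kappa_0)J_{m+1}(\kappa_0)$ obtained from the recursion~\eref{equation:recursion}, followed by Lemma~\ref{lemma:only_one_zero} to force $J_m(\kappa_0)J_{m+1}(\kappa_0)\le0$. In fact your treatment of the sign-change step is more careful than the paper's, which merely asserts from differentiability that the function $\mu\mapsto J_{\mu}(\kappa_0)$ changes sign at its zero, whereas you rule out a touching (non-crossing) zero explicitly via the strict monotonicity of $\mu\mapsto j_{\mu,n}$.
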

\begin{proof}
The recursion formula~\eref{equation:recursion} implies 
\begin{eqnarray*}
\hspace{-23mm}A_m(\kappa_0)^2-A_{m+1}(\kappa_0)^2&=&J_m(\kappa_0)^2+J_{m+1}(\kappa_0)^2-\frac{2m}{\kappa_0}J_m(\kappa_0)J_{m+1}(\kappa_0)\\&-&J_{m+1}(\kappa_0)^2-J_{m+2}(\kappa_0)^2+\frac{2(m+1)}{\kappa_0}J_{m+1}(\kappa_0)J_{m+2}(\kappa_0)\\&=&\frac{2(m+1)}{\kappa_0}J_{m+1}(\kappa_0)(J_m(\kappa_0)-J_{m+2}(\kappa_0))\\&-&\frac{2m}{\kappa_0}J_m(\kappa_0)J_{m+1}(\kappa_0)+\frac{2(m+1)}{\kappa_0}J_{m+1}(\kappa_0)J_{m+2}(\kappa_0)\\&=&\frac{2}{\kappa_0}J_m(\kappa_0)J_{m+1}(\kappa_0).
\end{eqnarray*}
For any fixed positive argument $x$, the function $\mathbb{R}\ni\mu\mapsto J_{\mu}(x)$ is differentiable and not identically zero. Thus, by assumption, and by lemma~\ref{lemma:only_one_zero}, this function changes sign precisely once in the interval $[m,m+1]$, so
\begin{equation*}
A_m(\kappa_0)^2-A_{m+1}(\kappa_0)^2=\frac{2}{\kappa_0}J_m(\kappa_0)J_{m+1}(\kappa_0)\le0.
\end{equation*}
\end{proof}

\begin{remark}\label{remark:h2_increasing}
Clearly, the function $\mathbb{N}_0\ni m\mapsto |H_m^{(1)}(\kappa)|^2$ is positive-valued. It is also strictly increasing, as can be seen from Nicholson's integral for $|H_m^{(1)}(\kappa)|^2$~\citep[pp. 441-444]{Watson-1945},
\begin{equation*}
|H^{(1)}_m(\kappa)|^2=\frac{8}{\pi^2}\int_{t=0}^{\infty}K_0(2\kappa\sinh t)\cosh2mt,\quad m\in\mathbb{Z},\,\kappa>0,
\end{equation*}
where $K_0$ is the modified Bessel function of the second kind. This namely implies
\begin{equation*}
\partial_m(|H_m^{(1)}(\kappa)|^2)=\frac{16m}{\pi^2}\int_{t=0}^{\infty}K_0(2\kappa\sinh t)\sinh2mt>0,\quad m>0,
\end{equation*}
since both $K_0$ and the hyperbolic sine are positive over positive reals.
\end{remark}
The above discussion suffices for a proof of the lower bound $\BW_-$.
\begin{proof}[\textbf{Proof of Theorem~\ref{theorem:MAIN}}]\label{proof:M-}
Let $m\in\mathbb{N}_0$. If $\kappa_0>j_{m,1}$ then there are $n\in\mathbb{N}_0$ and $\xi\in[m+n,m+n+1]$ satisfying $J_{\xi}(\kappa_0)=0$, and, by Lemma~\ref{lemma:A_variation}, $A_{m+n}(\kappa_0)\le A_{m+n+1}(\kappa_0)$. Since $\mathbb{N}_0\ni\mu\mapsto|H_{\mu}^{(1)}(\kappa)|$ is strictly increasing, and $\sigma_{\mu}$ is proportional to $|H_{\mu}^{(1)}(\kappa)|A_{\mu}(\kappa_0)$ for $\mu\in\mathbb{N}_0$, we have $\sigma_{m+n}\le\sigma_{m+n+1}$, hence $m<\mathscr{B}$. In conclusion, $\mathscr{B}\ge{\rm argmax}_{m\in\mathbb{N}_0}\{j_{m,1}<\kappa_0\}+1={\rm argmin}_{m\in\mathbb{N}_0}\{j_{m,1}\ge\kappa_0\}$.
\end{proof}

\begin{proof}[\textbf{Proof of Corollary~\ref{corollary:ceil}.}]
We use that~\citep[p. 516]{Watson-1945} $j_{m,1}=m+a_-m^{1/3}+O(m^{-1/3})$, with $a_-=1.855757$. The real solution of $n^3+a_-n-\kappa_0=0$ is readily found to be
\begin{equation*}
\hspace{-4mm}n=\frac{1}{6}\left(108\kappa_0+12\sqrt{12a_-^3+81\kappa_0^2}\right)^{1/3}-\frac{2a_-}{\left(108\kappa_0+12\sqrt{12a_-^3+81\kappa_0^2}\right)^{1/3}},
\end{equation*}
so $\BW_-\approx\lceil n^3\rceil$ for sufficiently large $\kappa_0$.
\end{proof}
For completeness, let us also provide an approximate expression for the conjectured value of the upper bound $\BW_+$. We have~\citep[p. 516]{Watson-1945} $y_{m,1}=m+a_+m^{1/3}+O(m^{-1/3})$, with $a_+=0.931577$. Also, $m<m-2+a_+(m-2)^{1/3}$ for integer $m\ge12$, so, for sufficiently large $\kappa_0$, $m>\kappa_0$ implies $y_{m-2,1}>\kappa_0$ and hence $\BW_+\approx\widetilde{\mathscr{B}}_+=\lceil\kappa_0\rceil$.

\section{Numerical validation}\label{section:numericalvalidation}

We here compute the bandwidth $\BW$, as well as the bandwidth bounds $\BW_-$ and $\BW_+$ of Theorem~\ref{theorem:MAIN} and Conjecture~\ref{conjecture:y}, respectively, for 300 values of the size parameters $\kappa=\kappa_0$ uniformly distributed over the interval $\kappa\in[2,100\pi]$. Recall that $\kappa=kR=2\pi R/\lambda$ and $\kappa_0=kR_0=2\pi R_0/\lambda$, where $R$ is the radius of the sampling circle $\partial D$, $R_0$ is the radius of the source domain, and $\lambda$ is the operating wavelength. Thus, we consider 300 values of the relative wavelength $\lambda/R=\lambda/R_0$ distributed nonuniformly over the interval $\lambda/R\in[1/50,\pi]$. Figure 5 shows the errors $\varepsilon_{\pm}=\BW_{\pm}-\BW$ and the relative errors $\varepsilon_{\sf rel,\pm}=|\BW_{\pm}-\BW|/\BW$ in the estimated bandwidth as function of the problem size parameter $\kappa$. 
\begin{figure}\label{figure:epsilon}
\begin{center}
\includegraphics[width=0.495\textwidth]{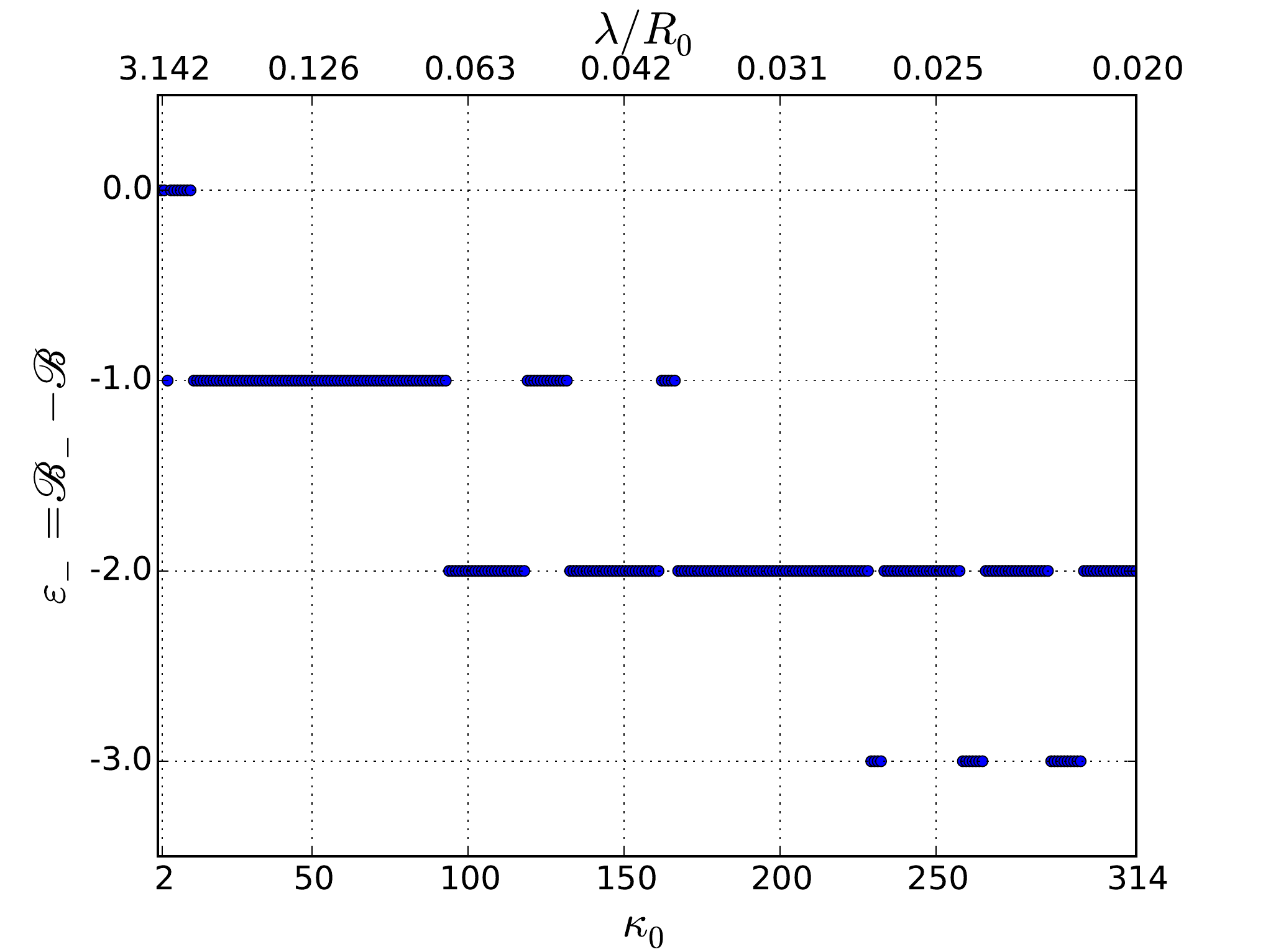}
\includegraphics[width=0.495\textwidth]{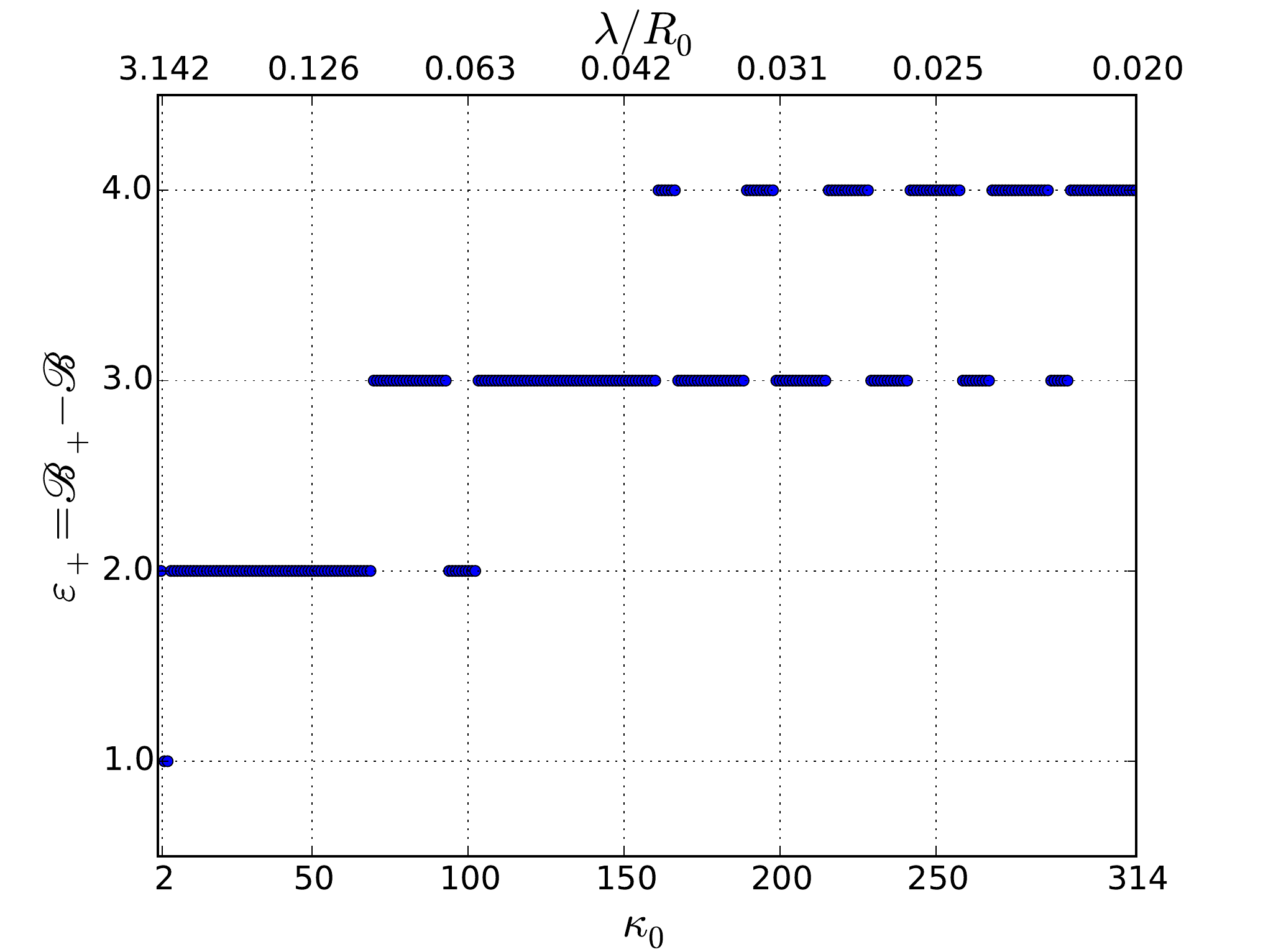}
\includegraphics[width=0.495\textwidth]{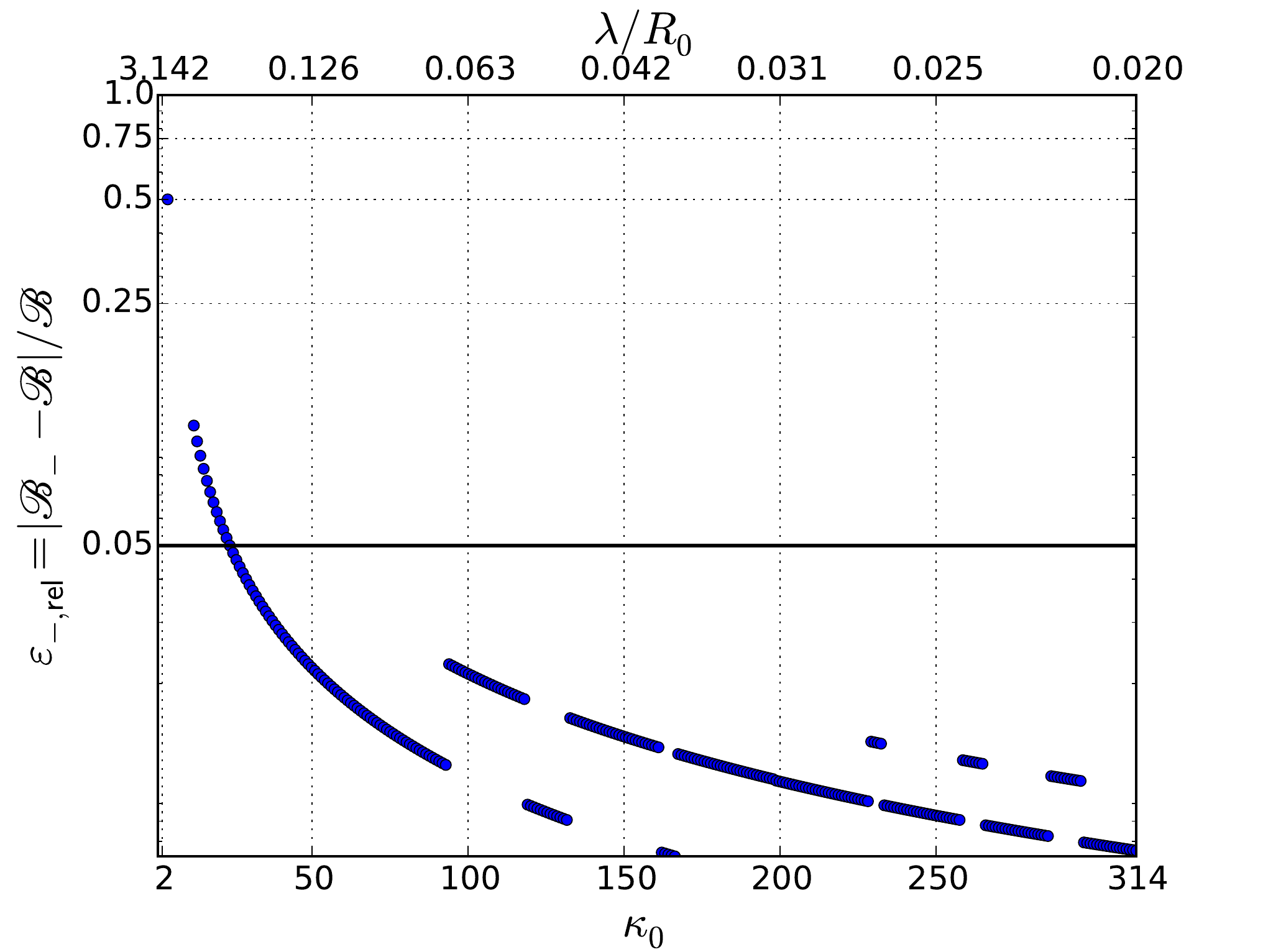}
\includegraphics[width=0.495\textwidth]{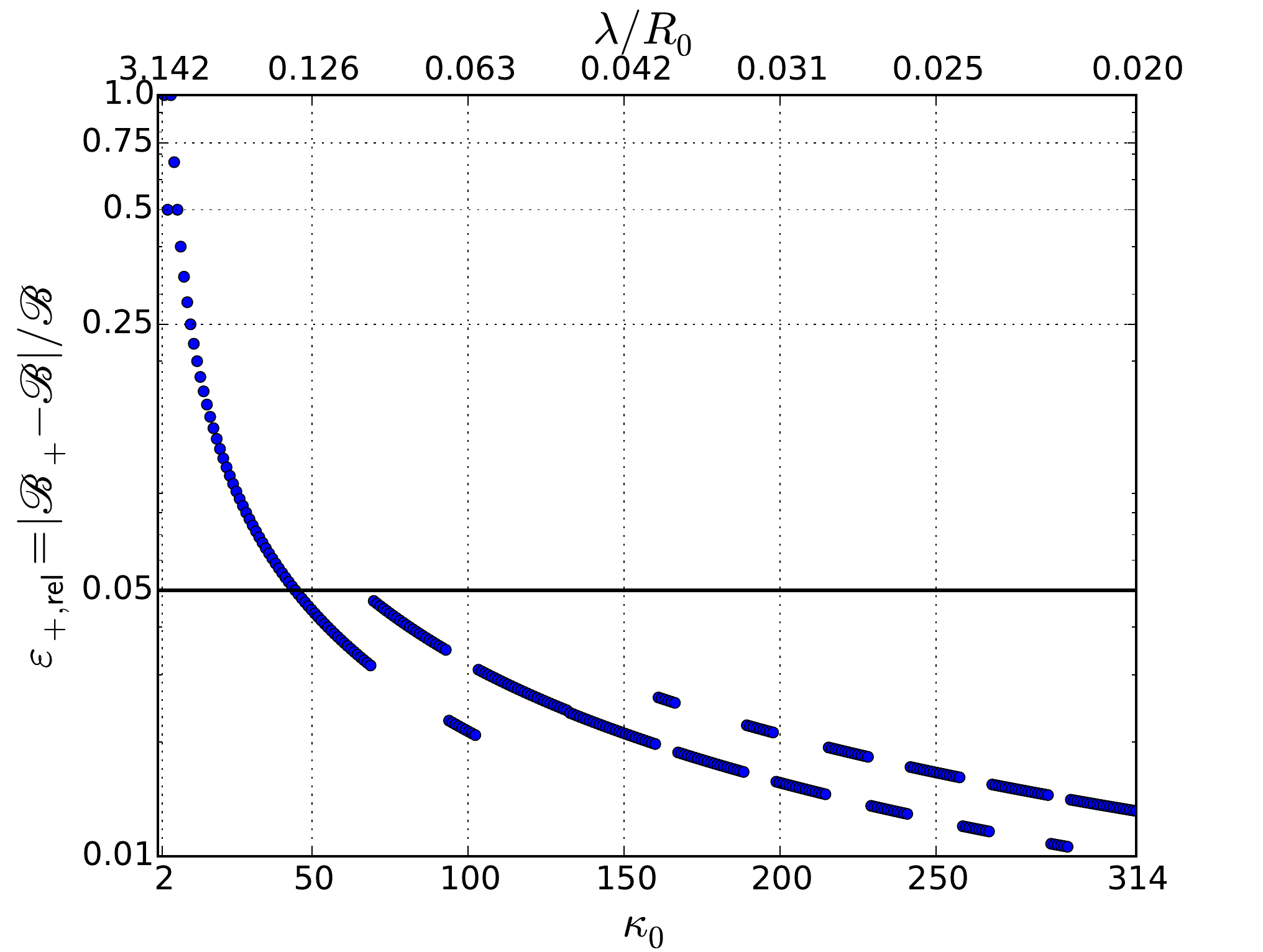}
\caption{Errors (top) and relative errors (bottom) in the lower and upper bounds on the bandwidth $\BW$ of Theorem~\ref{theorem:MAIN}, over the range of the source size parameter corresponding to $R/\lambda=R_0/\lambda\in[1/10,50]$.}
\end{center}
\end{figure}
\begin{figure}\label{figure:epsilon_approx}
\begin{center}
\includegraphics[width=0.495\textwidth]{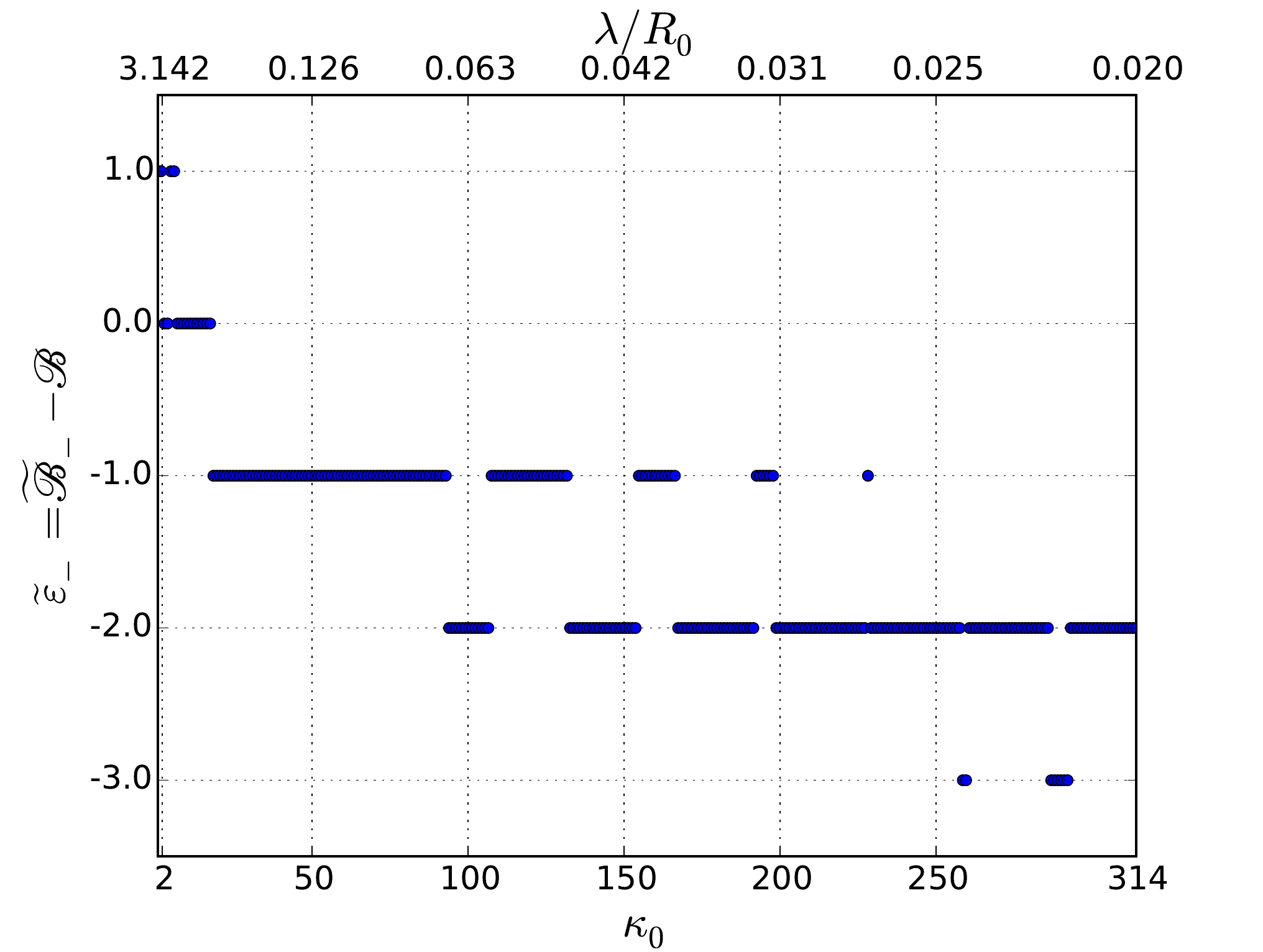}
\includegraphics[width=0.495\textwidth]{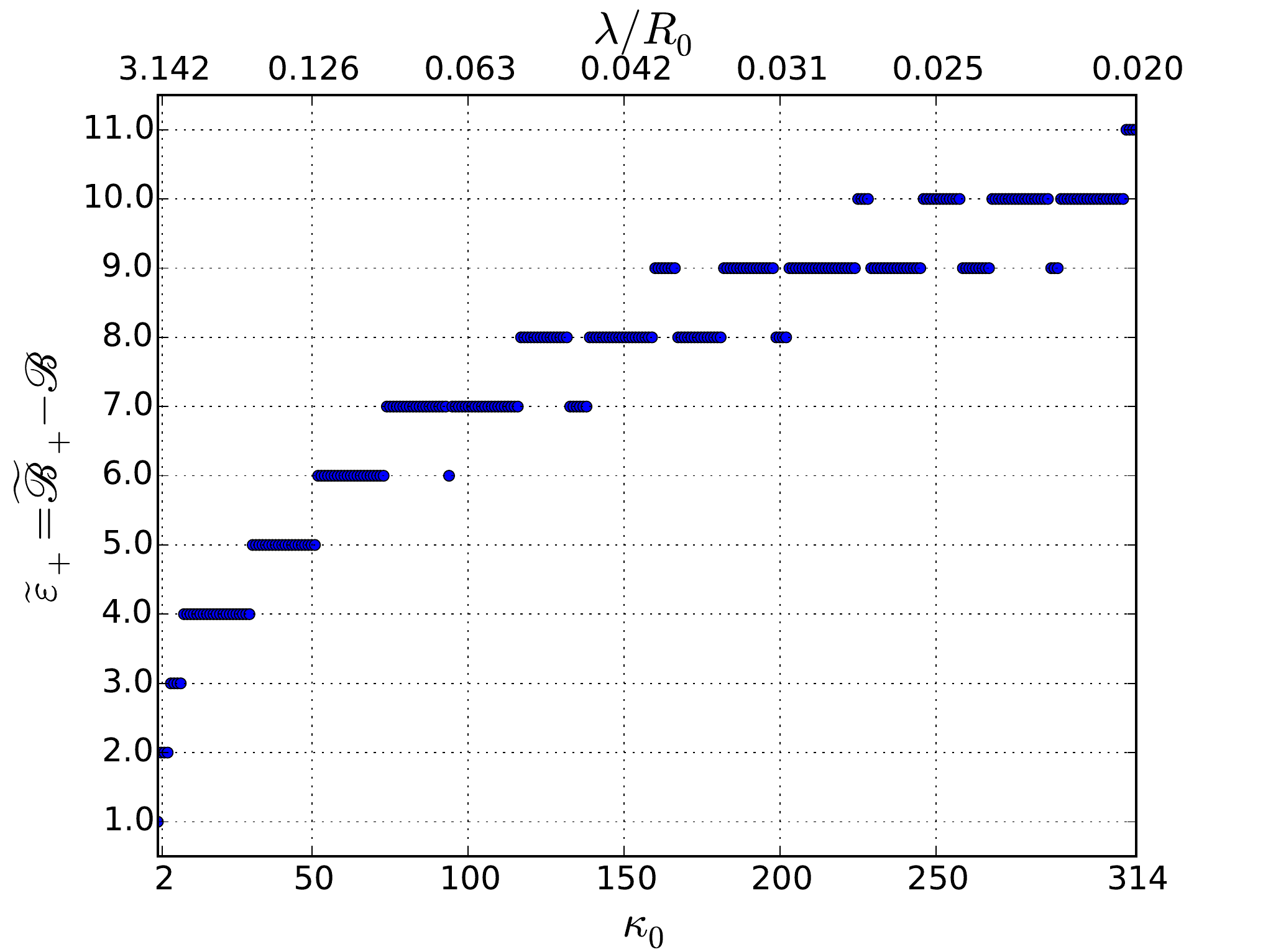}
\includegraphics[width=0.495\textwidth]{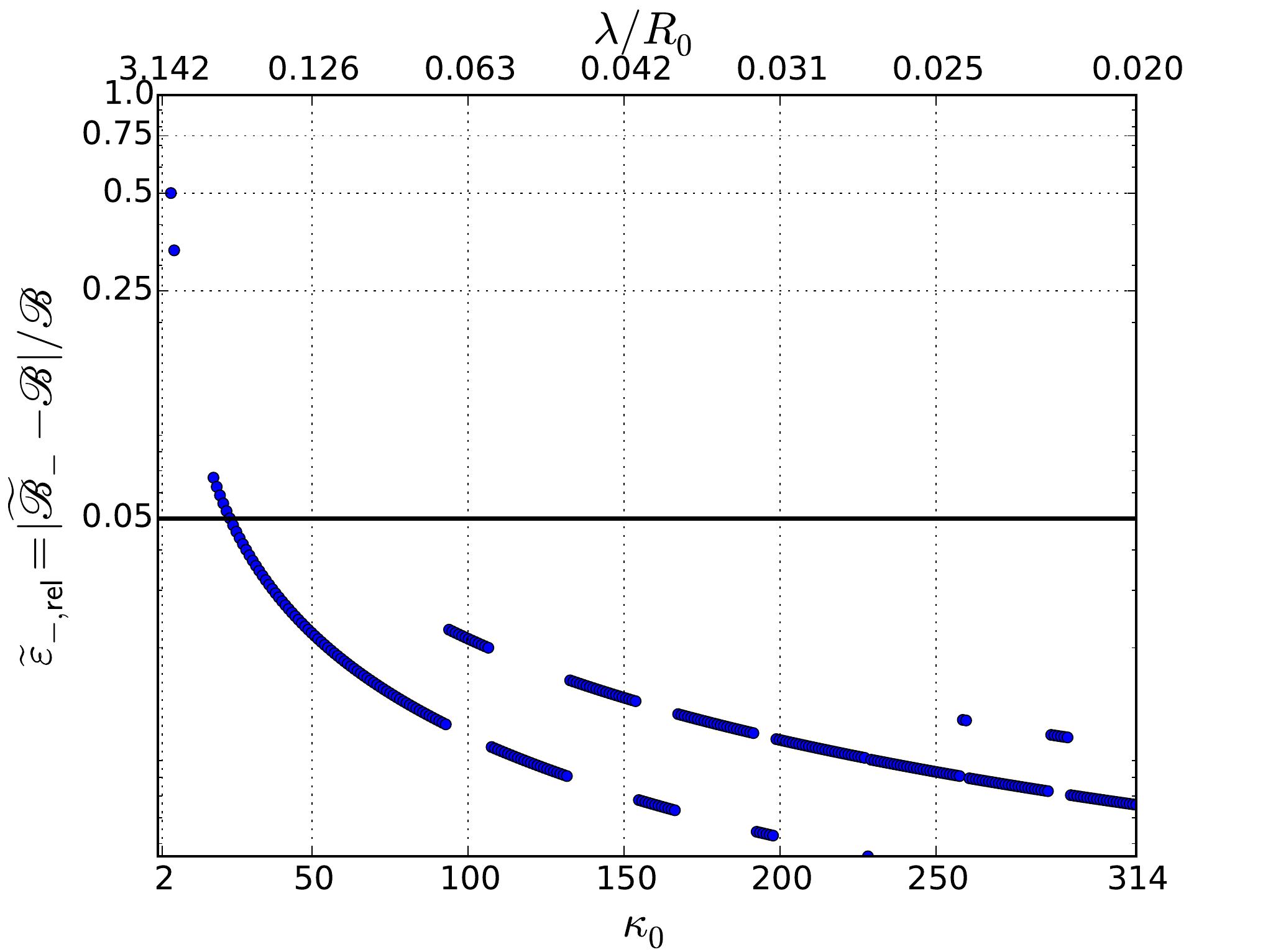}
\includegraphics[width=0.495\textwidth]{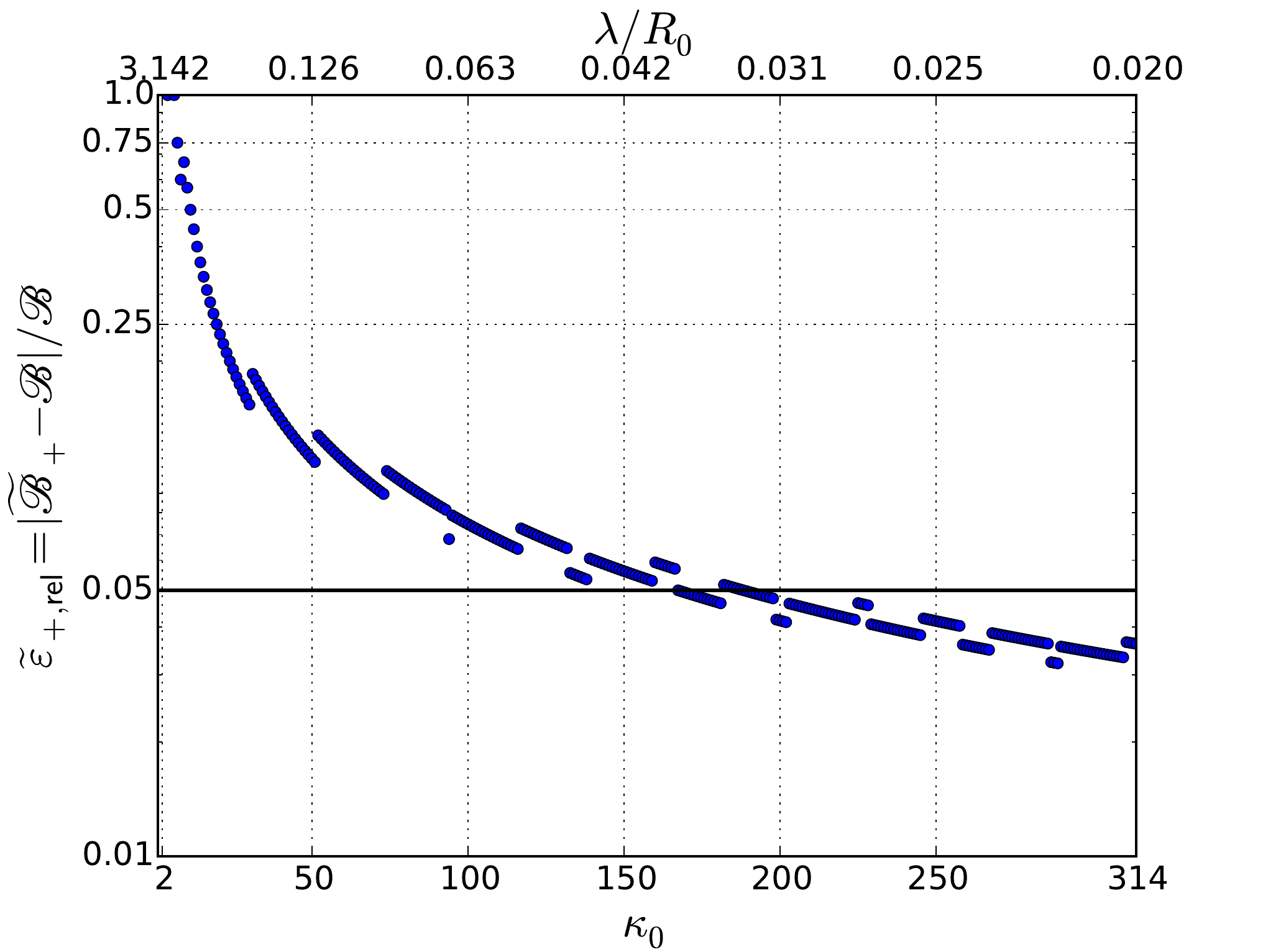}
\caption{Errors (top) and relative errors (bottom) in the approximate lower and upper bounds $\widetilde{\BW}_{\pm}$ on the bandwidth $\BW$ of Theorem~\ref{theorem:MAIN}, over a range of the problem size parameter $\kappa$.}
\end{center}
\end{figure}
For the two lowest considered values of $\kappa$, we find that $\BW=0$ and $\BW_-=0$; there, we set $\varepsilon_{\sf rel,-}=0$. Both $\BW$ and $\mathscr{B}_-$ are positive for higher considered values of $\kappa$. In particular, there is zero bandwidth for $\kappa$ smaller than some threshold value between approx. 1.7 and approx. 2.7, and for such size parameters $\kappa$ the inverse source problem is, from the viewpoint of the bandwidth of the singular values, similar to the inverse heat conduction problem. Over the considered interval for $\kappa$, the mean errors are $\overline{\varepsilon}_-=-1.68$, $\overline{\varepsilon}_+=3.02$, and the maximum absolute errors are $\max|\varepsilon_-|=3$, $\max|\varepsilon_+|=4$. The relative error in $\BW_-$ is below $5\%$ for $\kappa\ge24.7461$, i.e., for $R/\lambda\ge3.94$, and $\BW_+$ is below $5\%$ for $\kappa\ge45.7181$, i.e., for $\lambda/R\ge7.28$. 

We find both $\BW$, $\BW_-$ and $\BW_+$ to be approximately linear functions of $\kappa$ in the given interval, with least-squares fits summarized in Table~\ref{table:lsf}.
\begin{table}
\begin{center}
\begin{tabular}{c|c|c|c}\label{table:lsf}
                         & linear interpolant                        & mean absolute error & standard deviation\\
                         \hline
$\BW$             & $0.9793\kappa-3.9569$ & $0.4813$ & $3.9\cdot10^{-4}$\\
\hline
$\BW_-$ & $0.9736\kappa-4.7394$ & $0.5715$ & $4.8\cdot10^{-4}$\\
\hline
$\BW_+$ & $0.9861\kappa-2.0083$ & $0.4052$ & $3.4\cdot10^{-4}$
\end{tabular}
\end{center}
\caption{Linear regression of the computed bandwidth $\mathscr{B}$, and the lower ($\BW_-$) and upper ($\BW_+$) bandwidth bounds. We have here held equal the size parameters $\kappa$ and $\kappa_0$.}
\end{table}
Figure 6 shows errors in the approximations $\widetilde{\BW}_{\pm}$ (for definition of $\widetilde{\BW}_{\pm}$ see proof of Corollary~\ref{corollary:ceil} and the paragraph immediately following it, on page 9.) The approximate expression for the lower bound shows almost the same small error as the lower bound itself, and the approximate expression $\widetilde{\mathscr{B}}_+\approx\lceil\kappa_0\rceil$ for the upper bound, while simple, has error below $5\%$ only for problem size parameters of approx. 175 or higher when $\kappa=\kappa_0$ is maintained.

Our bounds $\BW_{\pm}$ are independent of the radius of the measurement surface, and we next validate this property numerically. Figure 7 shows the first 71 nonnegative-index singular values of the forward operator $F$ with size parameters $\kappa=100\pi$, $\kappa_0=10\pi$. The bandwidth is unchanged at $\BW=27$ (compare with Figure 3), as predicted by our bounds. The decrease in the numerical stability of the ISP due to the measurement boundary being farther away from the source is instead expressed in terms of the overall lower level of the singular values.
\begin{figure}
\begin{center}
\includegraphics[width=0.495\textwidth]{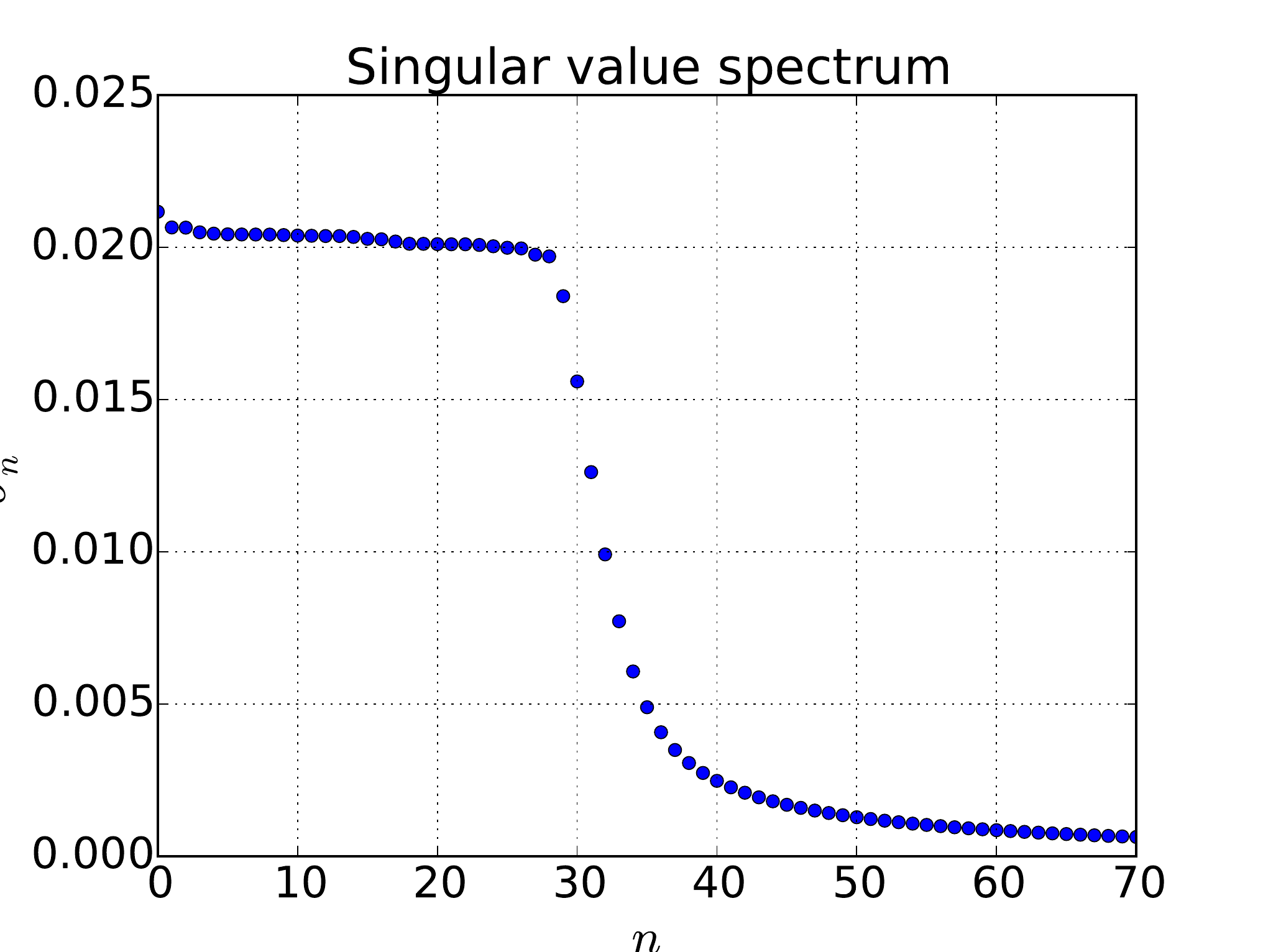}
\includegraphics[width=0.495\textwidth]{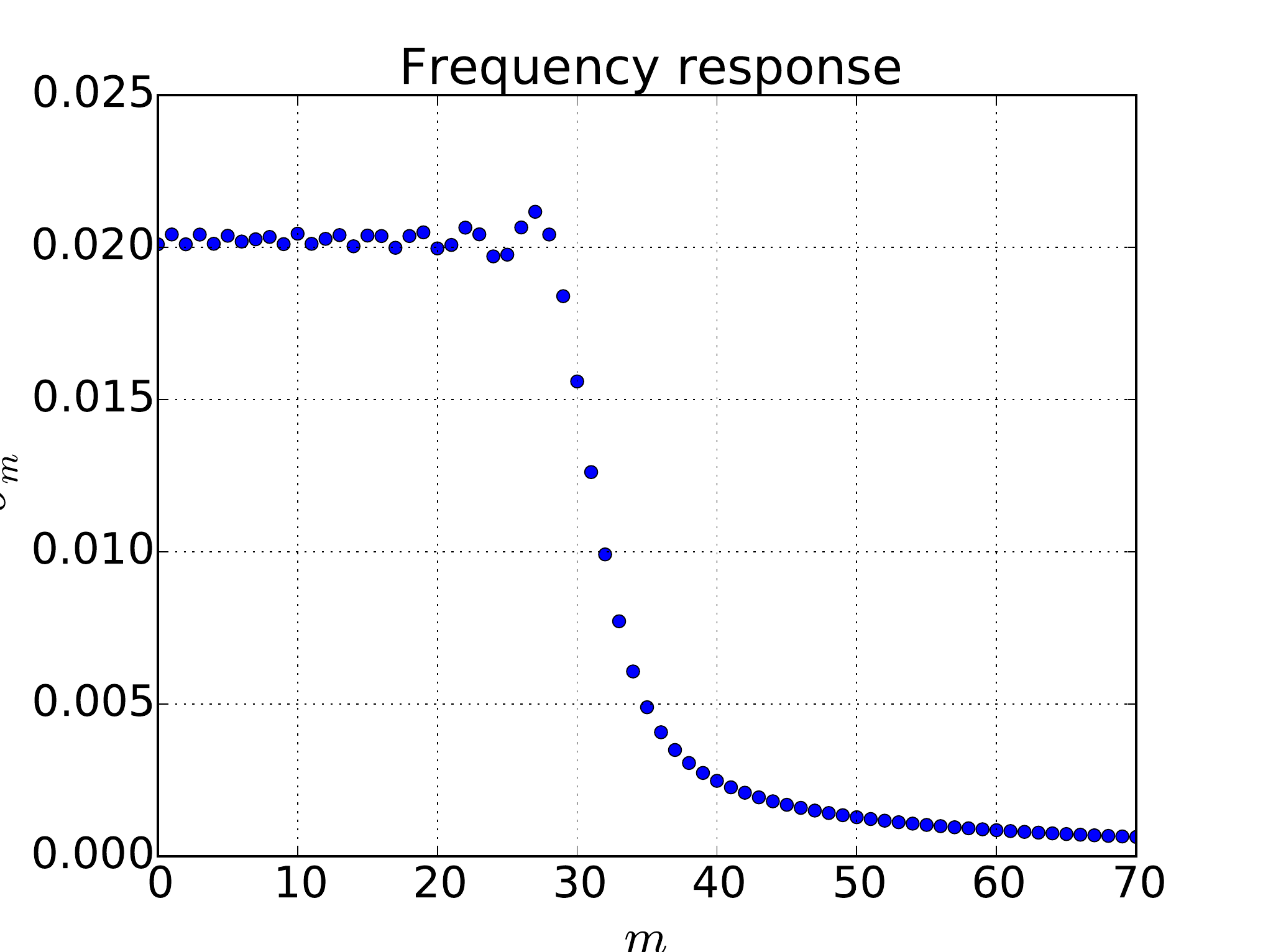}
\caption{Part of the singular value spectrum of the forward operator $F$ for $\kappa=100\pi$, $\kappa_0=10\pi$. Left: the singular values $\sigma_n$, $0\le n\le70$, ordered in a decreasing sequence. Right: the same singular values ordered according to the angular frequency $m$ of the right singular vector $\phi_m$.}
\end{center}
\end{figure}

\section{Discussion}\label{section:discussion}

The bandwidth estimates $\BW_{\pm}$ are directly applicable as optimal filter estimates in the numerical solution of the inverse source problem in terms of a truncated singular value decomposition (TSVD) of the forward operator. Next, it has been amply observed in the literature concerning the single-frequency inverse source problem that the numerical stability of the solution increases with the operating frequency. Theorem~\ref{theorem:MAIN} confirms and explicitly quantifies this increase in numerical stability, also for non-asymptotic frequencies.

Theorem~\ref{theorem:MAIN} of course has direct implications for the maximum achievable stable resolution of the reconstruction in the inverse source problem. Detailed analysis of this resolution requires an investigation of the pointwise behavior of the left singular vectors of the forward operator. While we here do not perform such analysis, we do note that the left singular vectors tend to be supported near the origin for low values of index $m$, and near the measurement boundary for high index values. This means the amplified noise produces is a 'wall of non-information' near the measurement boundary and blocks faithful reconstruction of the source inside $D$.

As shown in~\cite{Bao-2010} and in Section~\ref{section:spectralanalysis} here, the right singular vectors (defined over the measurement boundary) of the forward operator are proportional to $\exp(\rmi m\theta)$, $m\in\mathbb{Z}$. This means that the bandwidth index $\BW$ is approximately the angular frequency of the highest-frequency data component that can be stably inverted. Thus, the sampling theorem~\citep{Shannon-1949} is directly applicable with Theorem~\ref{theorem:MAIN} to give the following: in case the radiated field $u$ is sampled equidistantly at the boundary $\partial D$, any angular sampling rate greater than approximately $\Delta\theta\approx\pi/\BW\le\pi/\BW_-=\pi/{\rm argmin}_{m\in\mathbb{N}_0}\{j_{m,1}\ge\kappa_0\}$ is excessive due to the limited bandwidth of the forward operator.

Bandwidth bounds in Theorem~\ref{theorem:MAIN} and Conjecture~\ref{conjecture:y} involve the size parameter of only the source support, and in light of the successful numerical validation of these bounds, we find it justified to say that the bandwidth is generally independent of the radius $R$ of the measurement boundary relative to the radius $R_0$ of the source support (as long as $R\ge R_0$). As illustrated in Section~\ref{section:numericalvalidation}, the decrease in the robustness of the inversion (in the presence of noise) as $R_0/R$ decreases seems instead to be expressed by a lower overall level of the singular values. We therefore briefly analyze the asymptotic behavior of the singular spectrum~\eref{equation:xi_m} as $m\rightarrow0$, and as $m\rightarrow\infty$. The standard large-argument approximation of the Bessel functions of the first and second kind, valid for $\kappa_0\gg m^2-1/4$, yields
\begin{equation*}
\hspace{-10mm}J_m(\kappa_0)\sim\sqrt{\frac{2}{\pi\kappa_0}}\cos\left(\kappa_0-\frac{m\pi}{2}-\frac{\pi}{4}\right),\quad Y_m(\kappa_0)\sim\sqrt{\frac{2}{\pi\kappa_0}}\sin\left(\kappa_0-\frac{m\pi}{2}-\frac{\pi}{4}\right),
\end{equation*}
so
\begin{eqnarray*}
A_m(\kappa_0)^2&\sim&\frac{2}{\pi\kappa_0}\left(\cos\left(\kappa_0-\frac{m\pi}{2}-\frac{\pi}{4}\right)^2\right.\\&-&\Biggl.\cos\left(\kappa_0-\frac{m\pi}{2}-\frac{\pi}{4}+\frac{\pi}{2}\right)\cos\left(\kappa_0-\frac{m\pi}{2}-\frac{\pi}{4}-\frac{\pi}{2}\right)\Biggr)\\&=&\frac{2}{\pi\kappa_0}\left(\cos\left(\kappa_0-\frac{m\pi}{2}-\frac{\pi}{4}\right)^2+\sin\left(\kappa_0-\frac{m\pi}{2}-\frac{\pi}{4}\right)^2\right)=\frac{2}{\pi\kappa_0}
\end{eqnarray*}
and, since $\kappa\ge\kappa_0$, we also have $H_m(\kappa)^2\sim2/\pi\kappa$. Thus
\begin{equation*}
\sigma_m\sim\frac{\sqrt{2}}{\pi}\lambda\sqrt{R_0}
\end{equation*}
for $R_0/\lambda\gg(m^2-1/4)/2\pi$. Forward operators mapping from source spaces with larger supports thus have higher-valued singular values in the bandpass region, regardless of the size $R$ of the measurement boundary relative to the size $R_0$ of source support. However, we also see that the height of the bandpass decreases when the operating wavelength lambda decreases (equivalently, when the operating frequency increases), which may counteract the increase in stably recoverable information gained due to the increase in bandwidth. In the small-argument limit ($0<\kappa^2\ll m+1$) the standard approximation is
\begin{equation*}
J_m(\kappa_0)\approx\frac{1}{m!}\left(\frac{\kappa_0}{2}\right)^m,\quad Y_m(\kappa_0)\approx-\frac{(m-1)!}{\pi}\left(\frac{2}{\kappa_0}\right)^m,
\end{equation*}
so (since $\kappa_0\le\kappa$) $A_m(\kappa_0)^2\sim(\kappa_0/2)^{2m}m!^{-2}(m+1)^{-1}$ and $H_m(\kappa)^2\sim(\kappa/2)^{2m}m!^{-2}+m!^2m^{-2}\pi^{-2}(\kappa/2)^{-2m}$, resulting in
\begin{eqnarray*}
A_m(\kappa_0)^2H_m(\kappa)^2&\sim\left(\frac{R_0}{R}\right)^{2m}\frac{1}{\pi^2m^2(m+1)},
\end{eqnarray*}
and thus
\begin{equation*}
\sigma_m\sim\frac{1}{m}\sqrt{\frac{2}{m+1}}\left(\frac{R_0}{R}\right)^{m-1/2}R_0^{3/2}.
\end{equation*}
Evidently, the ratio $R_0/R$ of the source support radius to the measurement boundary radius strongly affects the rate of decay of the singular values, the robustness of the inversion to noise generally improving as the source support approaches the measurement boundary.

\section{Conclusion and further work}\label{section:conclusionand}

We analyzed the singular values of the forward operator associated with the single-frequency inverse source problem for the Helmholtz equation in the plane. In particular, we considered bounds on the information content that is preserved by the forward operator, proving a tight lower bound and conjecturing a tight upper bound on the singular value index of the highest-frequency data component that is stably recoverable. The bounds were expressed in terms of the zeros of Bessel functions of the first and the second kind. We validated both bounds numerically, establishing concrete estimates on the stably recoverable information in the inverse source problem regardless of the data sampling rate and the choice of regularization. The result can be used directly, e.g., to estimate optimal TSVD filters and data sampling rates. 

Proving the statement in Conjecture~\ref{conjecture:y} is a natural next step. Also, it would complete the picture to supplement the results on the bandwidth with a more precise description of the general levels and decay rates of the singular values as function of the size parameters of the source support and of the measurement boundary, individually or in relation to one another. Finally, a spectral analysis of the forward operator in dimension greater than 2 will be interesting.

\bibliographystyle{harvard}

\end{document}